\documentclass[a4paper, 12pt]{article}

\usepackage[latin1]{inputenc}
\usepackage[T1]{fontenc}
\usepackage{graphicx}
\usepackage{amsmath}
\usepackage{amssymb}
\usepackage{amsthm}
\usepackage{hyperref}

\hyphenation{ge-o-des-ic ge-o-des-ics half-way}

\setlength{\hoffset}{-0,5cm}
\addtolength{\textwidth}{1cm}
\addtolength{\voffset}{-2,2cm}
\addtolength{\textheight}{3,5cm}
\numberwithin{equation}{section}

\newtheorem{theorem}{Theorem}
\newtheorem{proposition}[theorem]{Proposition}
\newtheorem{lemma}[theorem]{Lemma}
\newtheorem{conjecture}[theorem]{Conjecture}
\newtheorem*{question}{Question}
\newtheorem{corollary}[theorem]{Corollary}

\theoremstyle{definition}
\newtheorem{example}[theorem]{Example}

\newcommand{\abs}[1]{\lvert #1 \rvert}
\newcommand{\cluster}{\mathcal{C}}

\title{A Tur\'an-type problem on distances in graphs}
\author{Mykhaylo Tyomkyn$^{{\rm a}}$, Andrew Uzzell$^{{\rm b}}$\thanks{Research supported in part by NSF grant DMS-0505550}\\
\small $^{\rm a}${\it University of Cambridge, Department of Pure Mathematics and Mathematical Statistics}\\
[-0.8ex]
\small{\it Centre for Mathematical Sciences, Wilberforce Road}\\
[-0.8ex]
\small{\it Cambridge, CB3 0WB, England}\\
\small $^{\rm b}${\it Department of Mathematics, Uppsala University}\\
[-0.8ex]
\small{\it PO Box 480, 751 06, Uppsala, Sweden}\\
}

\begin{document}

\date{\today}

\maketitle

\begin{abstract}
We suggest a new type of problem about distances in graphs and make several conjectures. As a first step towards proving them, we show that for sufficiently large values of $n$~and~$k$, a graph on $n$ vertices that has no three vertices pairwise at distance~$k$ has at most~$(n-k+1)^2 / 4$ pairs of vertices at distance~$k$. 
\end{abstract}

\section{Introduction}\label{intro}

In~\cite{bb-mt}, Bollob\'as and Tyomkyn determined the maximum number of paths of length~$k$ in a tree~$T$ on $n$ vertices. Here we suggest an extension of this problem to general graphs.

The `obvious' extension, counting paths of a given length in a graph~$G$, has been studied since 1971, see, e.g.,~\cite{ahl-kat78,alon81,alon86,bb-erd,bb-sar1,bb-sar2,byer,fur92,katz71} and the references therein. On the other hand, counting paths of length~$k$ in \emph{trees} can be interpreted as counting pairs of vertices at distance~$k$.  Therefore, a natural question to ask is the following.

\begin{question}
For a graph~$G$ on $n$ vertices, what is the maximum possible number of pairs of vertices at distance~$k$?
\end{question}

To the best of our knowledge, this question has not been considered previously. Our aim in this paper is to formulate several conjectures and to prove one of them in the first non-trivial special case.

For a graph~$G$, define the distance-$k$ graph~$G_k$ to be the graph with vertex set~$V(G)$ and $\{x,y\} \in E(G_k)$ if and only if $x$ and $y$ are at distance~$k$ in $G$, that is, the shortest path between $x$ and $y$ has length~$k$. We call such vertices $x$ and~$y$ \emph{$k$-neighbours} and the pair~$\{x, y\}$ a \emph{$k$-distance.}  We call $d_{G_k}(x)$ the \emph{$k$-degree} of~$x$.

Observe that if $H$ is an induced subgraph of~$G$, then $H_2$ is a subgraph of~$G_2$.  This need not be the case when $k \geq 3$.  It is clear that $G_k \cong H_k$ does not imply that $G \cong H$.  If $G_k \cong H_k$, then we say that $G$ is \emph{$k$-isomorphic} to $H$.

We wish to maximise the number of edges in $G_k$ over all graphs~$G$ on $n$ vertices. One attempt to construct a graph with many $k$-neighbours would be to consider what we call \emph{$t$-brooms}. For even $k \geq 4$ and for $t \geq 2$, define a $t$-broom to be a graph consisting of a central vertex~$v$ with $t$ `brooms' attached, each consisting of a path on $(k - 2)/2$ vertices with leaves attached to the ends opposite $v$. In this way, the leaves of different brooms will be at distance~$k$. For odd $k \geq 3$, to define a $t$-broom, take a copy of~$K_t$ and attach a broom to each vertex, adjusting the length of the path. (See Figure~\ref{fig:tBroomFig}.) As in Tur\'an's theorem, the number of $k$-distances in a $t$-broom will be maximised when the numbers of leaves in the brooms are as equal as possible.

\begin{figure}
\centering
\begin{tabular}{ccc}
\includegraphics[scale=0.5]{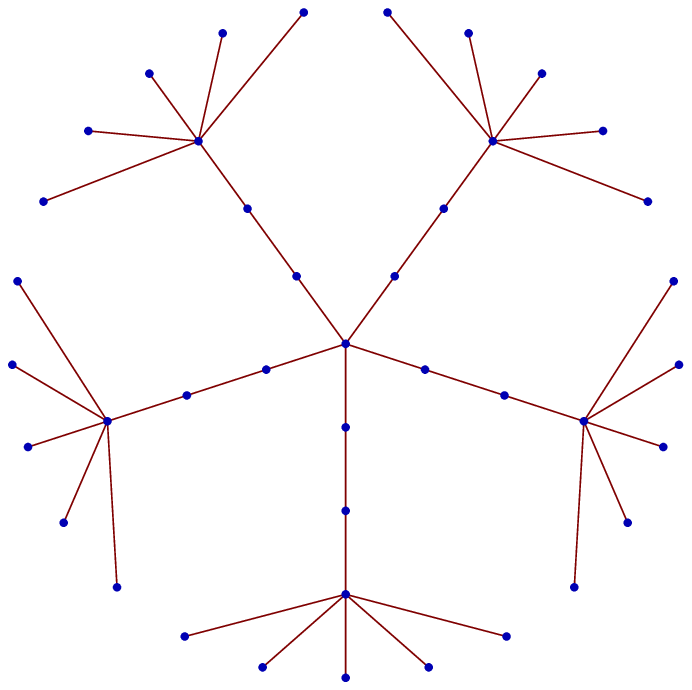} &
\hspace{1cm}
&
\includegraphics[scale=0.5]{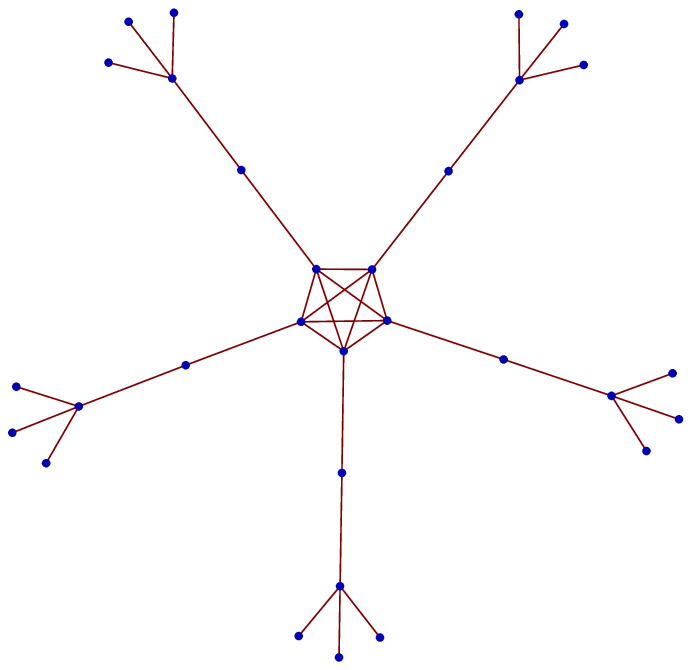}
\end{tabular}
\caption[Examples of $t$-brooms]{A 5-broom for $k = 8$ and a 5-broom for $k = 7$.}\label{fig:tBroomFig}
\end{figure}

In~\cite{bb-mt} Bollob\'as and Tyomkyn proved that if $G$ is a tree, then $e(G_k)$ is maximal when $G$ is a $t$-broom for some~$t$.

\begin{theorem}\label{thm:treemax} 
Let $n \geq k$.  If $G$ is a tree on $n$ vertices, then $e(G_k)$ is maximal when $G$ is a $t$-broom. If $k$ is odd, then $t=2$. If $k$ is even, then $t$ is within~$1$ of
\[\dfrac{1}{4} + \sqrt{\dfrac{1}{16} + \dfrac{n - 1}{k - 2}}.\]
\end{theorem}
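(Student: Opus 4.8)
The plan is to re-encode $e(T_k)$ via \emph{midpoints}. Suppose first that $k=2m$ is even. Every path of length $k$ in a tree has a unique central vertex $w$, and deleting $w$ splits $T$ into components $C_1,\dots,C_d$ rooted at the neighbours of $w$; writing $a_i(w)$ for the number of vertices of $C_i$ at distance exactly $m$ from $w$, each $k$-distance is counted exactly once, at its midpoint, so that
\[
  e(T_k)=\sum_{w\in V(T)}\ \sum_{1\le i<j\le d}a_i(w)\,a_j(w).
\]
When $k=2m+1$ is odd the same identity holds with the central vertex replaced by the central edge $\{w,w'\}$, its contribution being $b(w)\,b(w')$, where $b(w)$ counts the vertices at distance $m$ from $w$ whose shortest path avoids the edge $\{w,w'\}$, and symmetrically for $w'$. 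This turns the question into one about how large such ``layer products'' can be subject to $\sum_i\abs{C_i}=n-1$ (respectively $n-2$).

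Next I would run a \emph{compression} argument on an extremal tree $T$. I would first argue that $T$ has diameter exactly $k$: a tree of larger diameter wastes vertices on long sparse paths that contribute more to $e(T_k)$ once pruned and re-attached as leaves, and diameter less than $k$ forces $e(T_k)=0$. Once the diameter equals $k$, a central vertex $c$ (even $k$) or central edge (odd $k$) has the property that every component of $T-c$ lies within distance $m$ of $c$, hence contains no $k$-distance internally; this is exactly what makes the following two moves safe, each of which fixes $n$ and does not decrease $e(T_k)$. The first move \emph{straightens} a component $C$ of $T-c$: replace it by a path of the appropriate length issuing from $c$ with all remaining vertices of $C$ hung as leaves at its far end. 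Over all trees on $\abs{C}$ vertices this maximises the number $a_i(c)$ of vertices of $C$ in the top layer, and — since $C$ carried no internal $k$-distances to destroy — it cannot lower $e(T_k)$. The second move \emph{consolidates}: it relocates a whole pendant broom to the fixed centre $c$, trading $k$-distances whose midpoint lies off-centre for ones running through $c$. Iterating (with an auxiliary invariant to force termination), $T$ becomes a spider with $t$ equal-length legs ending in leaf clusters, i.e.\ a $t$-broom, in the even case; in the odd case, because a tree contains no triangle, the clique $K_t$ in the odd broom construction must collapse to a single edge, so $t=2$ and $T$ is a double broom.

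Finally I would optimise over brooms. For even $k$ a $t$-broom has one central vertex and $t$ legs each containing $(k-2)/2$ path vertices, so its leaf clusters have sizes $a_1,\dots,a_t$ with $\sum_i a_i=n-1-t(k-2)/2$ and $e(T_k)=\sum_{i<j}a_i a_j$; by the standard convexity (Tur\'an-type) balancing this sum is maximised when the $a_i$ are as equal as possible, giving essentially $\tfrac{t-1}{2t}\bigl(n-1-t(k-2)/2\bigr)^2$. Differentiating this expression in $t$ leads to the quadratic $2t^2-t=2(n-1)/(k-2)$, whose positive root is $\tfrac14+\sqrt{\tfrac1{16}+\tfrac{n-1}{k-2}}$; since the expression is unimodal in $t$ on the relevant range, the optimal integer $t$ lies within $1$ of this value. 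For odd $k$ the double broom has two leaf clusters of sizes $a$ and $b$ with $a+b=n-k+1$ and $e(T_k)=ab$, maximised by taking $a$ and $b$ as equal as possible. The main obstacle is the middle step: spelling out the compression moves and, above all, verifying that neither move decreases $e(T_k)$ — which forces one to account for \emph{all} $k$-distances, including those whose midpoint is not the vertex being modified, rather than only those through the prospective centre — together with carrying out the initial diameter reduction.
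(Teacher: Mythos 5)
First, a point of reference: this paper does not prove Theorem~\ref{thm:treemax} at all --- it is quoted from Bollob\'as and Tyomkyn~\cite{bb-mt} as a known result, so there is no in-paper proof to compare yours against. Judged on its own, your proposal gets the bookkeeping right at both ends: the midpoint (central vertex / central edge) decomposition of $e(T_k)$ is a valid and natural identity for trees, and the final optimisation over brooms is correct --- balancing the leaf clusters gives $\tfrac{t-1}{2t}\bigl(n-1-t(k-2)/2\bigr)^2$, whose stationarity condition is indeed $2t^2-t=2(n-1)/(k-2)$, with positive root $\tfrac14+\sqrt{\tfrac1{16}+\tfrac{n-1}{k-2}}$, and the odd case correctly collapses to the double broom with $ab$, $a+b=n-k+1$.

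The genuine gap is the entire middle of the argument, and the justifications you offer for it are unsound as stated. The claim that an extremal tree has diameter exactly $k$ is asserted via ``wasted vertices on long sparse paths,'' but configurations such as caterpillars with leaf clusters spaced at distance $k$ along a long spine have many $k$-distances and must be handled by an actual inequality, not by intuition. More seriously, the straightening move is justified by ``since $C$ carried no internal $k$-distances to destroy, it cannot lower $e(T_k)$'': this ignores the $k$-distances between vertices of $C$ and vertices outside $C$ whose midpoints are \emph{not} the centre $c$ (a vertex $x\in C$ with $d(x,c)=j<k/2$ can be at distance $k$ from vertices in other branches, and straightening moves $x$ and kills those pairs). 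The consolidation move has the same unverified trade-off, and you acknowledge as much in your closing sentence. Since the monotonicity of these compression moves is exactly where the difficulty of the theorem lies, what you have is a plausible programme with correct endpoints rather than a proof; the reduction of an arbitrary tree to a $t$-broom remains unestablished.
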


These results prompt us to make the following conjecture.

\begin{conjecture}\label{General}
Let $k \geq 3$. There exists~$h = h(k)$ such that if $n \geq h(k)$, then $e(G_k)$ is maximised over all $G$ with $\lvert G \rvert = n$ when
$G$ is $k$-isomorphic to a $t$-broom
for some~$t$.
\end{conjecture}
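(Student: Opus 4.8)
The plan is to prove Conjecture~\ref{General} as an extremal, Tur\'an-type statement stratified by the clique number $\omega=\omega(G_k)$ --- the largest number of vertices of $G$ that are pairwise at distance~$k$. Observe first that a $t$-broom is itself a tree, so the lower bound is automatic: Theorem~\ref{thm:treemax} already exhibits a broom achieving the conjectured maximum, and the content of the conjecture is that no graph does better, together with the claim that equality forces $G$ to be $k$-isomorphic to a broom. A direct computation shows that if $B$ is a $t$-broom then $B_k$, after deleting its isolated vertices, is precisely a complete $t$-partite graph whose parts are the leaf-sets of the $t$ brooms, the deleted vertices being the $\Theta(tk)$ ``core'' vertices of $B$ (exactly $k-1$ of them when $t=2$). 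Hence $e(B_k)$ is the edge count of a balanced complete $t$-partite graph on $n-(\text{core size})$ vertices, and optimising over $t$ one checks that the maximum is attained exactly at the value of $t$ in Theorem~\ref{thm:treemax}. It therefore suffices to prove, for every~$G$ on $n$ vertices, that $e(G_k)$ is at most the number of $k$-distances in the best $\omega$-broom on $n$ vertices, where $\omega=\omega(G_k)$.

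The engine for this bound should be a \emph{vertex-cost} lemma for cliques in $G_k$: if $v_1,\dots,v_\omega$ are pairwise at distance~$k$ in~$G$, then the union of geodesics joining them contains at least as many vertices as the core of an $\omega$-broom, the spider-like broom core being the essentially unique minimiser. Granting this, the remaining vertices of $G$ are the only ones that can carry many $k$-distances; since any $\omega+1$ of them that are pairwise at distance~$k$ would contradict $\omega(G_k)=\omega$, the subgraph of $G_k$ spanned by them is $K_{\omega+1}$-free, so Tur\'an's theorem bounds its edge count by that of the complete $\omega$-partite Tur\'an graph, and feeding back the vertex-cost estimate returns precisely the $\omega$-broom count. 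The first non-trivial instance $\omega\le 2$ is the triangle-free bound $e(G_k)\le(n-k+1)^2/4$ announced in the abstract: here $G_k$ has no three vertices pairwise at distance~$k$, a single $k$-distance already drags in a geodesic of $k-1$ further vertices, and a Mantel-type argument shows that the $k$-distances cannot make full use of all $n$ vertices, with equality only for a $2$-broom.

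The hard part --- and the reason the full conjecture is out of reach while $\omega\le 2$ is tractable --- is rigidity. In a general graph, unlike in a tree, geodesics may overlap, branch and run along cycles, so the vertex-cost lemma is delicate and, crucially, the ``free'' vertices need not be isolated in $G_k$: for instance $(C_n)_k$ with $3k<n<4k$ is a $2$-regular triangle-free graph with no isolated vertex at all. I therefore expect to need a stability argument: first show that any $G$ with $e(G_k)$ close to the extremum must already contain a broom-like skeleton, then perform a local cleaning step --- relocating leaves between brooms and deleting short-cuts --- that never decreases $e(G_k)$, until $G$ becomes $k$-isomorphic to a $t$-broom. Making this cleaning step effective, and in particular ruling out clever non-tree constructions that might beat every broom, is the crux of the problem, and is presumably why the present paper establishes only the first non-trivial case.
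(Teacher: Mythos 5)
You have not proved anything here, and neither could you be expected to check it against the paper: Conjecture~\ref{General} is explicitly left open by the authors (``we firmly believe Conjecture~\ref{General} to be true, but are unable to prove it''); the paper only establishes the very special case of three pairwise $k$-distant vertices being forbidden, i.e.\ $\omega(G_k)\le 2$, and even then only for $k\ge k_0$ and $n\ge n_0(k)$ (Theorem~\ref{thm:Dbroom}). Your text is a programme, not a proof: both load-bearing steps --- the ``vertex-cost'' lemma for $\omega$-cliques in $G_k$ and the stability/cleaning argument --- are stated as expectations, and you yourself concede that the crux (ruling out non-broom constructions, making the cleaning step effective) is unresolved. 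Moreover, even granting the vertex-cost lemma, the Tur\'an step does not follow as you describe it: the vertices on the union of geodesics of a single $\omega$-clique are in no way prevented from having many $k$-neighbours elsewhere in $G$, so they cannot simply be discounted from the vertex count before applying Tur\'an's theorem. This is precisely the obstacle the paper fights: for $\omega\le 2$ the ``Mantel-type argument'' you invoke (each $k$-distance drags in $k-1$ geodesic vertices) yields only $e(G_k)\le n(n-k+1)/4$, which is the paper's Lemma~\ref{CS} and is described there as roughly halfway between the trivial bound and $(n-k+1)^2/4$; closing that gap occupies all of Section~\ref{proofmain} (interior vertices, $k$-unaffiliated sets, Lemmas~\ref{AffPtsGeodesic}--\ref{clusters}, plus a final reduction to Theorem~\ref{thm:treemax} via a breadth-first tree), and is exactly why the result needs $k$ and $n$ large. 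So the first non-trivial instance is not ``tractable'' by the argument you sketch, and the general case is not reduced to anything.

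Two smaller but genuine errors: a $t$-broom is \emph{not} a tree when $k$ is odd and $t\ge 3$ (its core is a copy of $K_t$), so Theorem~\ref{thm:treemax} does not ``exhibit'' the conjectured extremal graph in that case --- the lower bound is automatic simply because brooms are graphs on $n$ vertices, not because of the tree theorem; and your rigidity claim (``equality only for a $2$-broom'') is asserted without argument, whereas in the paper even this endgame requires the structural analysis of $P_v,P_w$ and the reduction to the breadth-first tree $T_m$. Your instinct to stratify by $\omega(G_k)$ and to seek a Tur\'an-type bound per clique number does match the authors' stated hopes for Conjecture~\ref{Kt+1Free}, and your cycle example correctly illustrates that the ``core'' vertices need not be isolated in $G_k$; but as it stands the proposal restates the conjecture together with a plausible, unexecuted strategy whose two key lemmas are missing, so there is no proof to compare.
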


For small values of~$n$ there exist better constructions. For example, if $k = 3$ and $n = 7$, the $7$-cycle has more $3$-distances than any $t$-broom.

We firmly believe Conjecture~\ref{General} to be true, but are unable to prove it.  In this paper, we approach Conjecture~\ref{General} by placing a restriction on $\omega(G_k)$, the clique number of~$G_k$, which is the maximal number of vertices at pairwise distance~$k$. We formulate the following natural analogue of Conjecture~\ref{General} under this condition. 

\begin{conjecture}\label{Kt+1Free}
Let $k\geq 3$ and $t\geq 2$. There is a function~$h_2\colon \mathbb{N}\times \mathbb{N}\rightarrow \mathbb{N}$ such that if $n \geq h_2(k,t)$, then $e(G_k)$ is maximised over all $G$ with $\lvert G \rvert = n $ and $\omega(G_k)\leq t$ when
$G$ is $k$-isomorphic to a $t$-broom
for some~$t$. 
\end{conjecture}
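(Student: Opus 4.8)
\medskip\noindent\textbf{Proof proposal.}
Let $\beta=\beta(n,k,t)$ be the number of edges of the $k$-distance graph of a balanced $t$-broom on $n$ vertices. In any $t$-broom the only vertices of positive $k$-degree are the leaves, which split into $t$ sets with all cross-set pairs at distance exactly $k$ and all within-set pairs at distance $2$; hence $\beta=\sum_{i<j}a_ia_j$ with the $a_i$ as equal as possible and $\sum_i a_i=n-\tfrac{t(k-2)}{2}-1$ for $k$ even (and $n-\tfrac{t(k-1)}{2}$ for $k$ odd), which for $t=2$ is $\lfloor(n-k+1)^2/4\rfloor$. The conjecture thus reduces to the matching upper bound: for $n\ge h_2(k,t)$, every $G$ on $n$ vertices with $\omega(G_k)\le t$ satisfies $e(G_k)\le\beta$, with equality only if $G$ is $k$-isomorphic to a balanced $t$-broom. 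I would prove this along standard stability lines in three stages: (i) an approximate structure theorem; (ii) a width estimate producing the genuine loss of $\Theta(kt)$ vertices; (iii) an exact-stability step upgrading ``approximately a broom'' to ``exactly the $k$-distance graph of a broom''. Stage~(iii) is the step that has so far resisted everyone and is the reason the conjecture is still open; stages~(i)--(ii) are where the distance structure does real work, and I expect them to go through.

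For stage~(i): since $G_k$ is $K_{t+1}$-free, Tur\'an's theorem gives $e(G_k)\le\frac{t-1}{2t}n^2$, and if $e(G_k)>\beta$ then $G_k$ lies within $O_{k,t}(n)$ edges of the Tur\'an graph $T_t(n)$. A sharp Tur\'an-stability result --- for example, that a $K_{t+1}$-free graph with $\mathrm{ex}(n,K_{t+1})-q$ edges can be made $t$-partite by deleting at most $q$ edges --- then supplies a partition $V(G)=V_1\cup\dots\cup V_t$ across which all but $O_{k,t}(n)$ of the $G_k$-edges go; deleting and reassigning the vertices of $k$-degree below $\varepsilon n$ yields a core $C=C_1\cup\dots\cup C_t$, each $C_i$ of size $\Theta(n)$ (otherwise $G_k[C]$ would be essentially $(t-1)$-partite, hence too sparse), on which $G_k$ is nearly complete $t$-partite and with $e(G_k)\le e(G_k[C])+o(n^2)$. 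For stage~(ii), pick in each class a vertex $x_i\in C_i$ of $k$-degree $\ge\varepsilon n$; then $d_G(x_i,x_j)=k$ for all $i\ne j$. When $k$ is even the balls $B_i=\{\,v:d_G(v,x_i)<k/2\,\}$ are pairwise disjoint, since a common point would give $d_G(x_i,x_j)<k$; each $B_i$ contains the $k/2-1$ interior vertices nearest $x_i$ on a shortest $x_i$--$x_j$ path, while the midpoint of any such path lies outside every $B_\ell$, so at least $1+t(k-2)/2$ vertices lie on shortest paths among the $x_i$ (and at least $t(k-1)/2$ for $k$ odd, using balls of radius $(k-1)/2$). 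Combining this with the unconditional fact that no vertex can be an interior vertex of a shortest path between two of its own $k$-neighbours --- so that, after a short bootstrap that rechooses the $x_i$ to avoid the $o(n)$ non-$k$-neighbours the skeleton vertices may have, the skeleton is disjoint from $C$ --- gives $\abs{C}\le n-\bigl(\tfrac{t(k-2)}{2}+1\bigr)$ for $k$ even and $\abs{C}\le n-\tfrac{t(k-1)}{2}$ for $k$ odd, whence $e(G_k)\le e(G_k[C])+o(n^2)\le\frac{t-1}{2t}\abs{C}^2+o(n^2)\le\beta+o(n^2)$.

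Stage~(iii) must strengthen $\beta+o(n^2)$ to $\beta$, and it must be carried out on an actual extremiser $G$, because the stability argument above intrinsically loses an $o(n^2)$ term from the near-complete-$t$-partite approximation. On such a $G$, stages~(i)--(ii) already pin down a near-broom shape, and one then performs local surgeries --- removing an edge of $G$ that creates a within-class $k$-distance, adding edges to create a missing cross-class $k$-distance, rebalancing the sizes $\abs{C_i}$, promoting a core vertex whose $k$-degree is below $\max_{j\ne\ell}\abs{C_j}$, or deleting a redundant skeleton vertex --- each designed to increase $e(G_k)$ strictly while preserving $\omega(G_k)\le t$; iterating forces $G_k$ to be the $k$-distance graph of a balanced $t$-broom. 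I expect stage~(iii), together with the bootstrap closing stage~(ii), to be the main obstacle. The difficulty is that, in contrast with $t=2$, for $t\ge3$ the broom skeleton is genuinely two-dimensional --- a subdivided star for $k$ even, a $K_t$ with pendant paths for $k$ odd --- so one cannot reduce surgery on $G_k$ to surgery on a path-like $G$: one must control exactly how the $\binom t2$ shortest paths among the $x_i$ are permitted to share vertices, and, since $G$ can have maximum degree $\Theta(n)$ (the brooms themselves do), one needs rigidity of $G$ itself and not merely of $G_k$. The odd-$k$ case, whose skeleton carries an extra $K_t$ and whose distances involve parity corrections, looks the more delicate of the two.
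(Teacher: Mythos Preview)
The statement you address is Conjecture~\ref{Kt+1Free}, which the paper leaves open; there is no proof in the paper to compare against. The paper establishes only the case $t=2$ for sufficiently large $k$ (Theorem~\ref{thm:Dbroom}), and Section~\ref{disc} notes explicitly that its method does not generalise readily to $t\ge3$. Your write-up itself concedes that stage~(iii) ``has so far resisted everyone'', so what you have is a strategy, not a proof.

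That said, your route is genuinely different from the paper's $t=2$ argument. You run Erd\H{o}s--Simonovits stability on $G_k$ to get a near-complete $t$-partite core, peel off a $\Theta(kt)$-vertex skeleton, and then attempt local surgery on $G$. The paper never invokes Tur\'an stability: it works directly in the metric of $G$, counting interior and $k$-unaffiliated vertices via a Cauchy--Schwarz variant of Mantel (Lemma~\ref{CS}), manufacturing such vertices by a geodesic-colouring trick (Lemma~\ref{AffPtsGeodesic}) and BFS-tree longest-path control (Lemma~\ref{SpTreePathLen}), and ultimately reducing to the tree case so that Theorem~\ref{thm:treemax} finishes the job. The paper's method delivers the exact bound and the extremal characterisation without any surgery step, but it is tied to the path-like skeleton of the $2$-broom; your approach is more portable to $t\ge3$ but defers the hard exact step to the end.

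On the gaps: in stage~(ii), the claim that the skeleton vertices can be taken disjoint from the core $C$ is not justified by the argument you give. An interior vertex $v$ of a shortest $x_i$--$x_j$ path has $d(v,x_i),d(v,x_j)<k$, but nothing prevents $v$ from being a $k$-neighbour of most \emph{other} vertices in $C_i\cup C_j$ and hence from belonging to $C$; the promised ``short bootstrap that rechooses the $x_i$'' must exclude this for all $\Theta(kt)$ skeleton vertices simultaneously, and it is not clear that the $o(n)$ slack from stability is enough. And stage~(iii), as you say, is wide open: each surgery must preserve $\omega(G_k)\le t$ while strictly increasing $e(G_k)$, yet a single edge insertion or deletion in $G$ can perturb many $k$-distances in uncontrolled directions. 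This is precisely the difficulty the paper sidesteps at $t=2$ by collapsing the problem onto a BFS tree rooted at the midpoint of the $vw$-path and invoking Theorem~\ref{thm:treemax}; no analogue of that reduction is available in your outline.
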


In this paper, we shall discuss the case $t = 2$ of Conjecture~\ref{Kt+1Free}, that is, the case when no three vertices of~$G$ are pairwise at distance~$k$. Note that in this case the parity of~$k$ matters little, as the conjectured optimal example is just a path of length~$k - 2$ with equally many leaves attached to each of its endvertices. The number of $k$-distances in such a graph is simply $\lfloor(n - k + 1)^2/4\rfloor$.

We prove Conjecture~\ref{Kt+1Free} for $t=2$ and sufficiently large~$k$. More precisely, we prove the following assertion.

\begin{theorem}\label{thm:Dbroom}
There is a constant~$k_0$ and a function~$n_0 \colon \mathbb{N}\rightarrow \mathbb{N}$ such that for all~$k\geq k_0$, all~$n\geq n_0(k)$ and all graphs~$G$ of order~$n$ with no three vertices pairwise at distance~$k$,
\[e(G_k)\leq \dfrac{\left(n - k + 1\right)^2}{4}.\]
Moreover, if equality holds, then
$G$ is $k$-isomorphic to the double broom.
\end{theorem}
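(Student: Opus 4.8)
The plan is to combine an elementary but tight local inequality for $G_k$ with a stability analysis of triangle-free graphs; the upshot is that an extremal $G$ must contain a ``spine'' of $k-1$ vertices that carry no $k$-distance at all. The elementary observation is that for every edge $\{x,y\}$ of $G_k$ one has $d_{G_k}(x)+d_{G_k}(y)\le n-k+1$: fixing a geodesic $x=v_0,v_1,\dots,v_k=y$ in $G$, its $k-1$ internal vertices satisfy $d(x,v_i)=i$ and $d(y,v_i)=k-i$ with $0<i<k$, so none of them is a $k$-neighbour of $x$ or of $y$; since $G_k$ is triangle-free the sets $N_{G_k}(x)$ and $N_{G_k}(y)$ are disjoint, and together with the $v_i$ they display $d_{G_k}(x)+d_{G_k}(y)+(k-1)$ distinct vertices. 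Let $B=\{v:d_{G_k}(v)>0\}$, $b=|B|$. Mantel's theorem applied to $G_k[B]$ gives $e(G_k)\le b^2/4$, while summing the local inequality over $E(G_k)$ and applying Cauchy--Schwarz to $\sum_{v\in B}d_{G_k}(v)^2=\sum_{\{u,v\}\in E(G_k)}\bigl(d_{G_k}(u)+d_{G_k}(v)\bigr)$ gives $e(G_k)\le b(n-k+1)/4$. Assume $e(G_k)\ge (n-k+1)^2/4$; we shall deduce that $G$ is $k$-isomorphic to the double broom, which in particular proves the bound. If $b\le n-k+1$ then $e(G_k)\le b^2/4$ forces $b=n-k+1$ together with equality in Mantel's theorem, so $G_k$ is a balanced complete bipartite graph on $n-k+1$ vertices plus $k-1$ isolated vertices, i.e.\ the distance-$k$ graph of a double broom. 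Hence it suffices to rule out $b>n-k+1$, that is, the case in which $G_k$ has fewer than $k-1$ isolated vertices.

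\textbf{Stability and a long geodesic.} In that case $e(G_k)\ge (n-k+1)^2/4> n^2/4-kn$, so by the stability version of Mantel's theorem $G_k$ is within $O(kn)$ edges of a complete bipartite graph: there is a bipartition $V(G)=A\cup C$ with $e(G_k[A])+e(G_k[C])=O(kn)$ and $e_{G_k}(A,C)=n^2/4-O(kn)$, whence $|A|,|C|=n/2\pm O(\sqrt{kn})$ and all but $O(\sqrt{kn})$ vertices of each part have $k$-degree $n/2-O(\sqrt{kn})$. Pick an edge $\{x,y\}$ of $G_k$ with $x\in A$, $y\in C$ maximising $d_{G_k}(x)+d_{G_k}(y)$; then this sum is $n-O(\sqrt{kn})$, the set $N_{G_k}(x)$ covers almost all of $C$, and $N_{G_k}(y)$ covers almost all of $A$. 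Fix a geodesic $x=v_0,v_1,\dots,v_k=y$.

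\textbf{The main step: the spine is isolated in $G_k$.} The crux is to prove that each internal vertex $v_i$ has no $k$-neighbour. The mechanism is metric: a $k$-neighbour $z$ of some $v_i$ would lie at tightly controlled distance from $x$, from $y$, and hence from the large high-$k$-degree sets $N_{G_k}(y)$ (almost all of $A$) and $N_{G_k}(x)$ (almost all of $C$); chasing geodesics from $z$ into those sets and repeatedly using that no three vertices are pairwise at distance $k$, one is forced to accommodate $z$ -- indeed an entire geodesic through $v_i$ -- inside the $O(\sqrt{kn})$-element exceptional set, which is impossible once $n$ is large enough relative to $k$. Once all of $v_1,\dots,v_{k-1}$ are isolated in $G_k$ they belong to the $(n-b)$-element set of isolated vertices, so $n-b\ge k-1$, contradicting $b>n-k+1$. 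This closes the case analysis, proving $e(G_k)\le (n-k+1)^2/4$ with equality only for the distance-$k$ graph of a double broom.

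\textbf{The main obstacle.} All the real work is in the preceding paragraph, and the difficulty is quantitative: crude stability pins down the structure only up to an error of $O(\sqrt{kn})$ vertices, which -- since $n$ is huge relative to $k$ -- dwarfs the $k-1$ vertices we are trying to extract, so the metric argument cannot simply be run once against the first-pass bipartition. One must instead bootstrap: each internal vertex identified as (nearly) isolated is peeled off, stability is re-applied on $n-1$ vertices to a graph that is now closer still to complete bipartite, the spine is re-located, and the process is iterated, all while keeping the distance estimates and the no-triangle constraints robust against the shrinking exceptional set and controlling the mutual positions of the $k-1$ spine vertices (which need not, a priori, lie along a single geodesic). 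Making this iteration converge -- and it is here that ``$k$ large'' and ``$n$ large relative to $k$'' are genuinely needed -- is the heart of the proof.
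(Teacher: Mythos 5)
Your opening reduction is sound and matches the paper's starting point: the local inequality $d_{G_k}(x)+d_{G_k}(y)\le n-k+1$, the Cauchy--Schwarz bound $e(G_k)\le b(n-k+1)/4$ with $b$ the number of non-isolated vertices of~$G_k$, and the observation that the theorem (including the equality case, via Mantel) follows at once if $b\le n-k+1$, i.e.\ if $G_k$ has at least $k-1$ isolated (``interior'') vertices. But the entire content of the theorem is the remaining case $b>n-k+1$, and there your proposal is a plan, not a proof. The ``main step'' -- that the internal vertices of a geodesic between a well-chosen pair $\{x,y\}\in E(G_k)$ are isolated in $G_k$ -- is asserted with only a gestured mechanism (``chasing geodesics \dots one is forced to accommodate $z$ inside the exceptional set''), and as you yourself concede in the final paragraph, that mechanism does not close: the stability version of Mantel's theorem only controls $G_k$ up to an exceptional set of size $O(\sqrt{kn})$, which for $n\gg k$ is vastly larger than the $k-1$ spine vertices (or the $k+1$ vertices of any geodesic) you want to trap, so ``impossible once $n$ is large'' points the wrong way. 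The proposed remedy -- an iterated peel-and-reapply-stability bootstrap -- is named but not carried out, and it is exactly where all the work lies. A further soft spot: a $k$-neighbour $z$ of a middle spine vertex $v_i$ is \emph{not} at tightly controlled distance from $x$ and $y$; the triangle inequality only gives $k-i\le d(z,x)\le k+i$, a window of width $\sim k$, so the claimed metric control needs an argument you have not supplied.

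For comparison, the paper does not use stability at all. Assuming some pair $\{v,w\}\in E(G_k)$ has fewer than $2k-r-2$ $k$-unaffiliated vertices, it first shows (by a colouring/shifting argument along geodesics, Lemma~\ref{AffPtsGeodesic}) that any geodesic carrying $\varepsilon k$ $k$-affiliated vertices already yields $2k$ unaffiliated ones; hence every geodesic has fewer than $2\varepsilon k$ vertices off the long breadth-first path $P_v$ (Lemma~\ref{GeodesicDisj}). From this it deduces a dichotomy for other $k$-neighbours $w'$ of $v$ ($d(w,w')=o(k)$ or $(2-o(1))k$), rules out the second alternative and then the whole configuration by exhibiting $\sim k$ vertices outside the clusters of $v$ and $w$ whose $k$-degree is at most $2k$, and applying Tur\'an's theorem to contradict $e(G_k)\ge(n-k+1)^2/4$ for $n$ large; the equality/uniqueness statement then follows either from the interior-vertex count or from the midpoint argument together with the tree result (Theorem~\ref{thm:treemax}). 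Your case $b>n-k+1$ would need an argument of comparable strength; as written, the proposal restates that difficulty rather than resolving it.
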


We do not make any effort to determine $k_0$ and $n_0$ exactly; on the contrary, we are rather generous about them. However, we conjecture that $k=2$ and $(n,k)=(7,3)$ are the only exceptions to the optimality of the double-broom.

\begin{conjecture}\label{TriangleFree}
In the setting of Theorem~\ref{thm:Dbroom} we can take $k_0 = 3$, $n_0(3) = 8$ and $n_0(k) = k+1$ otherwise.
\end{conjecture}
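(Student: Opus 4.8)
\textbf{Proof proposal for Conjecture~\ref{TriangleFree}.}

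The plan is to first reduce the small-case conjecture to a clean extremal statement and then attack the threshold values directly, since the asymptotic machinery behind Theorem~\ref{thm:Dbroom} is far too lossy to pin down $k_0=3$, $n_0(3)=8$ and $n_0(k)=k+1$. The core claim to establish is: if $G$ has $n$ vertices, $\omega(G_k)\le 2$, and $(n,k)\notin\{(m,2):m\ge 2\}\cup\{(7,3)\}$, then $e(G_k)\le\lfloor(n-k+1)^2/4\rfloor$, with equality only for graphs $k$-isomorphic to the double broom. (For $k=2$ the Petersen graph and more generally Moore-type and strongly regular constructions beat the broom, so $k=2$ must genuinely be excluded; for $(n,k)=(7,3)$ the $7$-cycle is the stated exception. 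One should remark on these at the outset.) Note that for $n<k+1$ one trivially has $e(G_k)=0=\lfloor(n-k+1)^2/4\rfloor$ when $n\le k-1$, and $n=k$ gives at most one $k$-distance which matches $\lfloor 1/4\rfloor$... actually here one must be slightly careful and state the bound only for $n\ge k+1$, matching $n_0(k)=k+1$; the boundary cases $n\le k$ are handled by inspection.

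The key steps, in order. (1) \emph{Structural dichotomy on a longest geodesic.} Fix a geodesic $P=v_0v_1\cdots v_\ell$ of length $\ell=\mathrm{diam}(G)$. If $\ell<k$ then $G_k$ is empty and we are done, so assume $\ell\ge k$; the interesting regime is $\ell$ close to $k$. Partition $V(G)$ by distance to the endpoints of $P$ and use $\omega(G_k)\le 2$ to show that the $k$-distance graph is bipartite-like: more precisely, show that the ``far'' vertices split into two parts $A$ and $B$ (those near $v_0$ and those near $v_\ell$) with no $k$-distances inside $A$ or inside $B$ once $k$ is not too small, so $e(G_k)\le |A|\,|B|\le\lfloor(|A|+|B|)^2/4\rfloor$, and then bound $|A|+|B|\le n-(k-1)$ because the geodesic forces at least $k-1$ vertices to lie strictly between the two parts. (2) \emph{Small-diameter cleanup.} Handle separately the graphs with $\mathrm{diam}(G)$ only slightly above $k$, where the ``between'' region can be short-circuited; here one argues that any $k$-distance edge uses two vertices each of which is an endpoint of a near-longest geodesic, and a short combinatorial argument (counting along $\le 2$ such geodesics, using the triangle-freeness of $G_k$) gives the same product bound. (3) \emph{Equality analysis.} Trace the inequalities: equality in $|A|\,|B|\le\lfloor(|A|+|B|)^2/4\rfloor$ forces a balanced split and complete bipartiteness of $G_k$ between $A$ and $B$; equality in $|A|+|B|=n-k+1$ forces exactly $k-1$ vertices outside $A\cup B$ arranged along a single path; and complete bipartiteness then forces every $a\in A$, $b\in B$ to have a geodesic through that path, which pins $G$ down to a graph $k$-isomorphic to the double broom. (4) \emph{The genuinely small cases.} For $k=3$ and $n=8,\dots,N_0$ for some explicit modest $N_0$, and for each $k$ in the finite range $3\le k\le k_0$ with $n$ in the finite range $k+1\le n\le(\text{explicit})$, verify the bound by a direct, finite argument (or note it follows from step~(1) once $k\ge 4$); the only place a computer check or tedious case split is unavoidable is pinning down that $(7,3)$ is the \emph{unique} exception for $k=3$.

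The main obstacle will be step~(1) in the regime where $\mathrm{diam}(G)$ is barely larger than $k$ and step~(4)'s small-$k$ analysis, in particular ruling out all competitors for $k=3$. When the diameter is exactly $k$ or $k+1$, a $k$-distance edge $\{x,y\}$ only certifies that $x$ and $y$ are far apart, but a single vertex can then have large $k$-degree and the clean ``$A$ near $v_0$, $B$ near $v_\ell$'' partition can fail to be well-defined — two different geodesics of length $k$ may disagree on which side a vertex lies. Resolving this requires a careful choice of the reference geodesic (e.g.\ one through a maximum-$k$-degree vertex) plus an exchange argument using $\omega(G_k)\le 2$ to show the two sides are nonetheless consistent, and it is here, rather than in the asymptotic heart of Theorem~\ref{thm:Dbroom}, that getting the \emph{exact} thresholds $n_0(k)=k+1$ demands real work. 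The $k=3$ case is delicate because short cycles ($C_7$, and one must check $C_n$ and other sparse graphs for small $n$) are exactly the borderline constructions, so one needs a tight case analysis of graphs of diameter $3$ or $4$ on few vertices to confirm $n_0(3)=8$ is sharp.
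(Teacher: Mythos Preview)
This statement is a \emph{conjecture} in the paper, not a theorem: the authors explicitly say they ``do not make any effort to determine $k_0$ and $n_0$ exactly'' and leave Conjecture~\ref{TriangleFree} open. There is therefore no proof in the paper to compare your proposal against.

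As for the proposal itself, it is a plan rather than a proof. You correctly identify the shape of what would be needed --- a tight structural dichotomy on a reference geodesic, a bipartite-like split $A\cup B$ with $|A|+|B|\le n-k+1$, an equality analysis, and a finite check for small parameters --- and you are honest that the real difficulty lies in step~(1) when $\operatorname{diam}(G)$ is close to $k$ and in the $k=3$ small-case analysis. But none of those difficulties are resolved here: the claim that ``the $k$-distance graph is bipartite-like'' with no $k$-distances inside $A$ or inside $B$ is precisely the crux, and you give no mechanism for proving it with exact constants (the paper's Lemmas~\ref{AffPtsGeodesic}--\ref{clusters} only give this up to $o(k)$ error, which is why the authors could not get $k_0=3$). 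Likewise, the ``exchange argument using $\omega(G_k)\le 2$'' in the low-diameter regime is asserted but not supplied, and for $k=3$ you defer to a computer check without bounding the range $N_0$ over which it must run. In short, your outline is a reasonable description of what a proof might look like, but it does not go beyond what the paper's authors presumably already considered before declaring the statement a conjecture; the genuine gap is that steps~(1) and~(2) require new ideas, not just tightening of the existing asymptotic argument.
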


For $k = 2$, we can do a little better than the bound in Theorem~\ref{thm:Dbroom}, as the following construction shows.

\begin{example}\label{ex:G2max}
Let $X$ and $Y$ be cliques on $(n + 1)/2$ vertices each, with a vertex~$z$ in common. Take vertices $x \in X$ and~$y \in Y$. Remove the edges $\{x,z\}$ and~$\{y,z\}$ and add the edge~$\{x,y\}$; call the resulting graph~$G$. Then $G_2$, the distance-$2$ graph of~$G$, is a complete bipartite graph with one edge subdivided, and thus has a total of~$(n - 1)^{2}/4 + 1$ edges.
\end{example}

We believe that for $n \geq 5$, a triangle-free $G_2$ can have no more than~$(n - 1)^{2}/4 + 1$ edges.  This is clearly true for $n = 5$, and a computer search verifies that it also holds for $6 \leq n \leq 11$.  However, we cannot prove that it is true in general.  If this is indeed so, then it shows that for $k = 2$ the quantity~$(n - k + 1)^2/4$ is within~$1$ of the maximum. The same would hold for $k=3$ and $n=7$, when the aforementioned $7$-cycle wins by~$1$ over the double broom.

In the general case, with no restriction on $\omega(G_2)$, the maximum value of~$e(G_2)$ is straightforward.

\begin{proposition}
Let $G$ be a graph on $n$ vertices.  Then
\[e(G_2) \leq \dbinom{n - 1}{2}.\]
Moreover, if equality holds, then $G$ is a star.
\end{proposition}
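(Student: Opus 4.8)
The plan is to exploit the fact that, for every $k \geq 2$, the graphs $G$ and $G_k$ are edge-disjoint subgraphs of $K_n$: a pair of vertices cannot be simultaneously at distance $1$ and at distance $k$. In particular $e(G) + e(G_2) \leq \binom{n}{2}$, so when $G$ is connected the trivial bound $e(G) \geq n - 1$ (take a spanning tree) already yields $e(G_2) \leq \binom{n}{2} - (n - 1) = \binom{n-1}{2}$.

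To handle disconnected $G$, I would observe that vertices in different components of $G$ are not at distance $2$, so $G_2$ has no edges between the components $C_1, \dots, C_c$ of $G$; writing $n_i = \abs{C_i}$, this gives $e(G_2) = \sum_{i=1}^{c} e\bigl((G[C_i])_2\bigr)$. Applying the connected bound within each $C_i$ and then the super-additivity of the binomial coefficient (that is, $\binom{a}{2} + \binom{b}{2} \leq \binom{a+b}{2}$, which merely compares the number of pairs inside two blocks with the number of all pairs), we obtain
\[
e(G_2) \;\leq\; \sum_{i=1}^{c} \binom{n_i - 1}{2} \;\leq\; \binom{\,\textstyle\sum_{i=1}^{c}(n_i - 1)\,}{2} \;=\; \binom{n - c}{2} \;\leq\; \binom{n - 1}{2}.
\]

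For the equality statement (take $n \geq 3$; the cases $n \leq 2$ are degenerate) I would read back through this chain. Equality forces $c = 1$, so $G$ is connected; $e(G) = n - 1$, so $G$ is a tree; and every non-edge of $G$ must be realised as an edge of $G_2$, i.e.\ any two non-adjacent vertices of $G$ lie at distance exactly $2$, so $G$ has diameter at most $2$. A tree of diameter at most $2$ is a star, and conversely $(K_{1,n-1})_2 = K_{n-1} \cup K_1$ has exactly $\binom{n-1}{2}$ edges; hence equality holds precisely for the star. I do not expect a genuine obstacle here: the argument rests only on the two elementary facts that $G$ and $G_2$ are edge-disjoint and that a connected graph on $m$ vertices has at least $m - 1$ edges, the sole point needing care being the bookkeeping for disconnected $G$ carried out above.
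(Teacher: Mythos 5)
Your proof is correct and follows essentially the same route as the paper: both rest on the fact that $G$ and $G_2$ are edge-disjoint in $K_n$ together with $e(G) \geq n-1$ for connected $G$, with equality forcing a tree of diameter $2$, i.e.\ a star. The only difference is that you handle disconnected $G$ directly via super-additivity of $\binom{\cdot}{2}$, whereas the paper uses induction on $n$ for that step --- an immaterial variation.
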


\begin{proof}
The result is clear for $n = 3$. We suppose that for some $n > 3$, the result holds for all graphs on at most~$n-1$ vertices. Let $G$ be a graph on $n$ vertices. We may assume that $G$ is connected. Otherwise, if $G$ has $\ell \geq 2$ components of sizes $c_1$, $\ldots\,$,~$c_{\ell}$, say, then, by hypothesis,
\[e(G_2) \leq \sum_{i=1}^{\ell} \dbinom{c_i - 1}{2} < \dbinom{n-1}{2}.\]
Observe that, setting $d = \operatorname{diam}(G)$,
\[\dbinom{n}{2} = e(G) + e(G_2) + \cdots + e(G_d).\]
Thus,
\begin{equation}\label{eq:EdgeDecomp}
e(G_2) \leq \dbinom{n}{2} - e(G).
\end{equation}
Since $G$ is connected, $e(G) \geq n - 1$, and hence $e(G_2) \leq \binom{n - 1}{2}$. Note that if equality holds, then $G$ must be a tree. Moreover, in this case, by~\eqref{eq:EdgeDecomp}, $\operatorname{diam}(G) = 2$, so $G$ must be a star.
\end{proof}

\section{Preliminaries}\label{prelim}

In this section, we shall prove a straightforward bound on $e(G_k)$.  We shall then discuss possible ways of extending this result to the upper bound in Theorem~\ref{thm:Dbroom}.  We shall also discuss a useful property of spanning trees.

Since $G_k$ is triangle-free by our assumption, Mantel's theorem implies that $e(G_k)\leq n^2/4$. In fact, we can do somewhat better by adapting a standard proof of Mantel's Theorem (see, e.g.,~\cite{AigTuran}) to $k$-distances.

\begin{lemma}\label{CS}
If $G_k$ is triangle-free, then
\[e(G_k)\leq \dfrac{n(n - k + 1)}{4}.\]
\end{lemma}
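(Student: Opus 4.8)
The plan is to mimic the Cauchy--Schwarz proof of Mantel's theorem, but to replace the trivial bound $d_{G_k}(x) + d_{G_k}(y) \le n$ for an edge $\{x,y\} \in E(G_k)$ with a sharper one. The key observation is that if $\{x,y\}$ is a $k$-distance and $z$ is a common $k$-neighbour of both $x$ and $y$, then $x$, $y$, $z$ would be pairwise at distance $k$, contradicting triangle-freeness of $G_k$; hence $N_{G_k}(x) \cap N_{G_k}(y) = \emptyset$. But we can say more: no vertex on a shortest $x$--$y$ path (other than $x$ and $y$ themselves) can be a $k$-neighbour of $x$, since such a vertex is at distance strictly less than $k$ from $x$; the same applies to $y$. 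A shortest $x$--$y$ path has $k+1$ vertices, i.e. $k-1$ internal vertices, and these $k-1$ vertices together with $x$ and $y$ account for $k+1$ vertices that lie in neither $N_{G_k}(x)$ nor $N_{G_k}(y)$ (note $x \notin N_{G_k}(x)$ and $y \notin N_{G_k}(y)$ automatically, and $x \notin N_{G_k}(y)$, $y \notin N_{G_k}(x)$ since a vertex is at distance $k$, not $0$, from itself and $x,y$ are each other's $k$-neighbours — so I must be slightly careful and count $x, y$ and the $k-1$ internal vertices, of which $x,y$ were already excluded from the opposite neighbourhood, giving in total that $\lvert N_{G_k}(x) \cup N_{G_k}(y) \rvert \le n - (k-1)$). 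Combining with disjointness of the two neighbourhoods yields $d_{G_k}(x) + d_{G_k}(y) \le n - k + 1$ for every edge $\{x,y\} \in E(G_k)$.

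Next I would sum this inequality over all edges of $G_k$. The left-hand side is $\sum_{\{x,y\} \in E(G_k)} \bigl(d_{G_k}(x) + d_{G_k}(y)\bigr) = \sum_{x \in V(G)} d_{G_k}(x)^2$, since each vertex $x$ contributes $d_{G_k}(x)$ to the sum once for each of its $d_{G_k}(x)$ incident edges. The right-hand side is $(n-k+1) \, e(G_k)$. Hence
\[
\sum_{x \in V(G)} d_{G_k}(x)^2 \le (n - k + 1)\, e(G_k).
\]
By the Cauchy--Schwarz inequality (or convexity), $\sum_x d_{G_k}(x)^2 \ge \frac{1}{n}\bigl(\sum_x d_{G_k}(x)\bigr)^2 = \frac{1}{n}\bigl(2 e(G_k)\bigr)^2 = \frac{4 e(G_k)^2}{n}$. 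Chaining the two displays gives $\frac{4 e(G_k)^2}{n} \le (n-k+1)\, e(G_k)$, and dividing by $4 e(G_k)/n$ (the bound is trivial when $e(G_k) = 0$) yields $e(G_k) \le \frac{n(n-k+1)}{4}$, as required.

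I do not anticipate a serious obstacle here; the argument is a direct adaptation of a textbook proof. The only point that requires a little care is the count giving $d_{G_k}(x) + d_{G_k}(y) \le n - k + 1$: one must correctly identify the $k+1$ vertices of a geodesic from $x$ to $y$, check which of them can already be ruled out of which neighbourhood, and confirm that no double-counting inflates the bound. One should also note explicitly that $G$ is assumed connected here only in the sense that $x$ and $y$ have a shortest path of length exactly $k$ — which is automatic for any edge of $G_k$ — so no connectivity hypothesis on $G$ is actually needed.
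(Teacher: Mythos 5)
Your proof is correct and follows essentially the same route as the paper: the triangle-freeness of $G_k$ gives disjoint $k$-neighbourhoods, the $k-1$ internal vertices of an $x$--$y$ geodesic lie in neither neighbourhood, so $d_{G_k}(x)+d_{G_k}(y)\leq n-k+1$ for every edge of $G_k$, and summing plus Cauchy--Schwarz finishes the argument exactly as in the paper. Your parenthetical aside momentarily asserts $x\notin N_{G_k}(y)$, which is false (they are each other's $k$-neighbours), but your final count only excludes the $k-1$ internal vertices, so the bound $n-(k-1)$ and everything after it stands.
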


\begin{proof}
For a vertex~$x$, let $\nu(x)$ be the number of $k$-neighbours of~$x$, or equivalently, the degree of~$x$ in $G_k$. We may bound $e(G_k)$ as follows: for each pair of vertices $x$,~$y$ at distance~$k$, count the $k$-neighbours of~$x$ and of~$y$. Note that since $x$ and $y$ have no common $k$-neighbours (otherwise there would be a triangle in $G_k$), we have $\nu(x) + \nu(y)\le n$. In fact, we can claim that $\nu(x) + \nu(y)\leq n - k + 1$, since none of the $k - 1$ internal vertices on the shortest path between $x$ and $y$ are $k$-neighbours of~$x$ or of~$y$. Summing over all such pairs~$\{x,y\}$, we obtain
\begin{equation}\label{eq:CS1sthalf}
(n - k + 1) e(G_k) \geq \sum_{\{x,y\}\in E(G_k)} \bigl(\nu(x) + \nu(y)\bigr).
\end{equation}

Observe that for each $x \in V(G)$, the quantity~$\nu(x)$ appears $\nu(x)$ times on the right-hand side of~\eqref{eq:CS1sthalf}. By the Cauchy-Schwarz inequality, we have 
\begin{equation}\label{eq:CSsecondhalf}
\sum_{\{x,y\}\in E(G_k)} \bigl(\nu(x) + \nu(y)\bigr) = \sum_{x \in V(G)} \nu(x)^2 \geq \dfrac{1}{n} \Biggl(\sum_{x \in V(G)} \nu(x)\Biggr)^{\!2} = \dfrac{4}{n} \bigl(e(G_k)\bigr)^2.
\end{equation}

It follows from~\eqref{eq:CS1sthalf} and~\eqref{eq:CSsecondhalf} that
\begin{equation}\label{eq:CSbd}
e(G_k) \leq \dfrac{n(n - k + 1)}{4},
\end{equation}
as claimed.
\end{proof}

The bound that we have just proved is about halfway between the trivial $n^2/4$ and the desired $(n - k + 1)^2/4$. There are two natural ways in which one could try to improve~\eqref{eq:CSbd}. One is to try to find vertices that have no $k$-neighbours at all. We say that such a vertex is an {\em interior} vertex; otherwise, a vertex is called an {\em exterior} vertex.  The inspiration for this terminology is as follows.  If $k \leq \operatorname{diam}(G) \leq 2k - 1$, then a vertex at or near the centre of the graph has no $k$-neighbours, while a vertex that is far from the centre of the graph will have one or more $k$-neighbours.  If $G$ has at least~$r$ interior vertices, then adapting the proof of Lemma~\ref{CS} improves the bound in~\eqref{eq:CSbd} to
\begin{equation}\label{eq:CSint}
e(G_k) \leq \dfrac{(n - r)(n - k + 1)}{4}.
\end{equation}
Thus, we are done if we can find at least~$k - 1$ interior vertices (which holds in the case when $G$ is the double-broom). So, we may assume that $r < k - 1$.

The other way to improve~\eqref{eq:CSbd} would be to find many pairs of vertices~$\{u,v\}$ such that many vertices~$z$ are $k$-neighbours of neither $u$ nor~$v$. We say that a vertex~$v \in V$ is {\em $k$-unaffiliated} with a vertex~$u \in V$ if $d(u,v) \neq k$. A vertex~$v$ is {\em $k$-unaffiliated} with a set~$U \subseteq V$ if it is $k$-unaffiliated with each $u \in U$. Otherwise, we say that $v$ is {\em $k$-affiliated} with~$U$. Thus, an interior vertex is $k$-unaffiliated with~$V$. If $G$ has $r$ interior vertices and each $\{u,v\}\in E(G_k)$ has $p$ $k$-unaffiliated vertices, then~\eqref{eq:CSbd} improves to
\begin{equation}\label{eq:CSintunaff}
e(G_k) \leq \dfrac{(n - r)(n - p)}{4} \leq \dfrac{\left(n - \dfrac{r + p}{2}\right)^2}{4}. 
\end{equation}
In particular, we are done if $p \geq 2k - r - 2$, i.e., if every $\{u,v\}\in E(G_k)$ has at least~$k - r - 1$ $k$-unaffiliated vertices other than those on the shortest path between $u$ and $v$.

In order to prove Theorem~\ref{thm:Dbroom}, we shall show that for every pair of $k$-neighbours in $G$, $p \geq 2k - r - 2$.  To do so, we shall need to study paths in $G$.  In connection with this, we shall need the following easy result about the lengths of paths in a spanning tree.  If $P$ is a path, we write $\abs{P}$ to denote the number of vertices in $P$.

\begin{lemma}\label{SpTreePathLen}
Let $r \geq 2$ and let $G$ be a graph on at least~$r + 1$ vertices.  If $G$ has at most~$r$ interior vertices, then every spanning tree of~$G$ either contains no path of length at least~$r + 1$ or contains a path of length~$2k - r$.
\end{lemma}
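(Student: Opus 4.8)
The plan is to control the diameter of an arbitrary spanning tree~$T$ of~$G$. If $r\ge k$ then $2k-r\le r+1$ and the statement is immediate (a path of length~$\ge r+1$ already contains one of length~$2k-r$), so assume $r\le k-1$. Since a path of length~$\ell$ contains a sub-path of every shorter length, a tree contains a path of length exactly~$2k-r$ precisely when its diameter is at least~$2k-r$; hence it suffices to show that $\operatorname{diam}(T)\ge r+1$ forces $\operatorname{diam}(T)\ge 2k-r$. Suppose not: $r+1\le D\le 2k-r-1$, where $D=\operatorname{diam}(T)$, and fix a longest path $P=v_0v_1\cdots v_D$ in~$T$.

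The one link between $G$ and~$T$ that I would exploit is that $T$ is a subgraph of~$G$, so $d_T\ge d_G$; consequently every exterior vertex of~$G$, together with one of its $k$-neighbours, is an endpoint of a path of length at least~$k$ in~$T$. Since $\abs{G}\ge r+1>r$, at least one vertex is exterior, so already $D\ge k$. Next I would pin down where exterior vertices of~$G$ can sit on~$P$. If $v_i$ with $0<i<D$ is exterior and $y$ is a $k$-neighbour of~$v_i$, then the $v_i$--$y$ path in~$T$ leaves~$v_i$ toward~$v_0$, toward~$v_D$, or along a third branch at~$v_i$; splicing onto it the corresponding half of~$P$ produces a path of length at least~$k+(D-i)$ in the first case, at least~$k+i$ in the second, and (in the third case) one path of each kind, forcing $D\ge 2k$. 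As $P$ is a longest path and $D<2k$, we conclude $i\ge k$ or $i\le D-k$. Thus every~$v_i$ with $D-k+1\le i\le k-1$ is an interior vertex of~$G$; there are $2k-1-D\ge r$ of these, so in view of the hypothesis $G$ has exactly~$r$ interior vertices, $D=2k-r-1$, and the interior set of~$G$ is precisely $\{v_{k-r},\dots,v_{k-1}\}$.

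To rule out $D=2k-r-1$ I would turn to the ends of~$P$. As $v_0$ ends a longest path it is a leaf of~$T$, and $0\notin\{k-r,\dots,k-1\}$ (as $k-r\ge 1$); so if $v_0$ were interior it would be an $(r+1)$-st interior vertex of~$G$, which is impossible. Hence $v_0$, and by symmetry~$v_D$, are exterior. Let $y$ be a $k$-neighbour of~$v_0$: the $T$-path joining them runs $v_0,v_1,\dots,v_j$ along~$P$ and then a path of length~$q\ge 0$ off~$P$ to~$y$, with $j+q\ge k$; comparing the path from~$y$ to~$v_D$ through~$v_j$ with the longest path~$P$ gives $q\le j$, hence $j\ge k/2$, and symmetrically a $k$-neighbour~$z$ of~$v_D$ leaves~$P$ at some~$v_{j'}$ with $j'\le D-k/2$. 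The remaining step, which I expect to be the main obstacle, is to combine these constraints with the fact that the whole block $v_{k-r},\dots,v_{k-1}$ consists of interior vertices --- so neither $y$ nor~$z$ lies in that block and the branch points $j,j'$ cannot be trapped inside it --- and thereby assemble, out of the $v_0$--$y$ path, a stretch of~$P$, and the $v_D$--$z$ path, a single path of length at least~$2k-r$, contradicting $D=2k-r-1$. Everything preceding this last step is routine surgery on paths in a tree; the delicate point is the endpoint bookkeeping needed to extract the final unit of length.
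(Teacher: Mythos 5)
Your splicing argument is essentially the paper's own proof in a cleaner per-vertex form: both arguments take a longest path in the spanning tree, use $d_T\ge d_G$ to turn a $k$-neighbour of a middle vertex into a long tree path, splice it onto the far half of the longest path to contradict maximality, and then count interior vertices against the hypothesis. Up to the point where you conclude that every $v_i$ with $D-k+1\le i\le k-1$ is interior, everything you do is correct.

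The genuine problem is the final case you flag as "the main obstacle": it is not merely delicate, it cannot be closed, because you are proving an off-by-one strengthening of what the lemma actually asserts. You read "a path of length $2k-r$" as $2k-r$ \emph{edges}; the paper's proof (it assumes $\abs{P}<2k-r$ for contradiction, with $\abs{P}$ the number of vertices) and, more importantly, the way the lemma is used later ("$P_v$ contains at least~$2k-r$ vertices") show that the intended conclusion is a path on $2k-r$ \emph{vertices}, i.e.\ of length $2k-r-1$ in your convention. The stronger edge-version is false: take $G$ to be the path on $2k-r$ vertices $v_0v_1\cdots v_{2k-r-1}$ (its own spanning tree). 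A vertex $v_i$ has a $k$-neighbour exactly when $i\ge k$ or $i\le D-k$ with $D=2k-r-1$, so $G$ has exactly $r$ interior vertices, it contains a path of length $r+1$ whenever $r\le k-1$, yet its longest path has only $2k-r-1$ edges; the double broom gives a similar example with $r=k-1$. So the configuration $D=2k-r-1$ that you are trying to exclude genuinely occurs, and no amount of endpoint bookkeeping will produce a path of $2k-r$ edges. The fix is simply to aim at the correct target: assume for contradiction $D\le 2k-r-2$; then your count gives $2k-D-1\ge r+1$ interior vertices among $v_{D-k+1},\dots,v_{k-1}$, contradicting the hypothesis, and this already yields a longest path on at least $2k-r$ vertices, which is exactly what the rest of the paper needs. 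With that adjustment your argument is complete, and your trichotomy (branch towards $v_0$, towards $v_D$, or off $P$, the last forcing $D\ge 2k$) is a tidy substitute for the paper's computation with the $r+1$ central vertices.
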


\begin{proof} 
Let $T$ be a spanning tree of~$G$ such that $T$ contains a path of length at least~$r + 1$. Let $P$ be a longest path in $T$ and let $u$ and $v$ be its endpoints. Suppose that $\lvert P \rvert < 2k - r$. Let $x_1$, $\ldots\,$,~$x_{r + 1}$ be the $r + 1$ central vertices of~$P$, indexed consecutively in order of increasing distance from $u$.  (If $\abs{P}$ is even, then there are two choices for the $r + 1$ central vertices of~$P$; choose one arbitrarily.)  Suppose that some $x_i$ had a $k$-neighbour in $G$, called $y$. Let $Q$ be the path in $T$ from $x_i$ to~$y$. Since, for all~$x$,~$y \in G$, $d_G(x,y) \leq d_T(x,y)$, we know that $\abs{Q} \geq k + 1$. Let $z$ be the furthest vertex from $x_i$ at which $P$ and $Q$ coincide; by symmetry, we may assume that $z$ is closer to~$v$ than $x_i$ is. We shall construct a path in $T$ that is longer than~$P$, in contradiction to the assumption. Define $P_{+}$ to be the path formed by $Q$ and the portion of~$P$ between $u$ and $x_i$. Then
\[
\abs{P_{+}} = \abs{Q} + d_T(x_1, x_i) + d_T(u, x_1) \geq (k + 1) + (i - 1) + d_T(u, x_1).
\]
Because $x_1$, $\ldots\,$,~$x_{r+1}$ are the $r + 1$ central vertices of~$P$, we have
\[
d_T(u, x_1) \geq \left\lfloor \dfrac{\abs{P} - 1 - d_T(x_1, x_{r + 1})}{2} \right\rfloor \geq \dfrac{\abs{P} - r - 2}{2},
\]
from which we deduce that
\[
\abs{P_{+}} \geq k + i + \dfrac{\abs{P} - r - 2}{2} \geq \dfrac{\abs{P} + 2k - r}{2} > \abs{P}.
\]
Thus, no $x_i$ can have a $k$-neighbour in $G$, which means that all of the $x_i$ must be interior vertices.
\end{proof}

The following concepts will play key roles in the proof of Theorem~\ref{thm:Dbroom}.  Let $v$ and $w$ be two vertices at distance~$k$.  Recall that a \emph{geodesic} is a shortest path between two vertices in a graph.  Define the {\em $vw$-path} $P$ to be a shortest path between $v$ and $w$.  Order the vertices of~$G$ and conduct a breadth-first search starting from $v$ such that the resulting tree~$T_v$ contains $P$. We define the {\em $v$-path}~$P_v$ to be the longest path in $T_v$.  (If there is more than one longest path in $T_v$, then we choose one arbitrarily.)  Similarly, we define $T_w$ to be breadth-first tree with respect to $w$ containing $P$, and the {\em $w$-path}~$P_w$ to be the longest path in $T_w$.

Consider the breadth-first tree~$T_v$ and the $v$-path~$P_v$.  Let $x$ and $y$ be the endpoints of~$P_v$.  Moving along $P_v$ from $x$ to~$y$, or in fact along any path in $T_v$, the distance from $v$ will first decrease, then increase --- this is a fundamental property of breadth-first search trees, for the depth of a vertex~$w$ in such a tree equals~$d_G(v, w)$.  Thus, $P_v$ can be divided into two geodesics.  Let $z$ be a vertex of~$P_v$ at minimal distance from $v$.  Let $P_1$ denote the portion of~$P_v$ between $x$ and $z$ and $P_2$ the portion of~$P_v$ between $z$ and $y$; one of these may be empty.

By our assumption that $r < k - 1$, if $v$ and $w$ are $k$-neighbours, then $T_v$ must contain a path of length at least~$r + 1$.  Hence, by Lemma~\ref{SpTreePathLen}, $P_v$ contains at least~$2k - r$ vertices.  Note also that at most two vertices on $P_v$ can be at distance~$k$ from $v$.

\section{Proof of Theorem~\ref{thm:Dbroom}}\label{proofmain}

From now on, we shall assume that $G$ satisfies $e(G_k) \geq (n - k + 1)^2 / 4$.  As noted above, we shall also assume that $r < k - 1$.

Let us briefly discuss the proof of Theorem~\ref{thm:Dbroom}.  First, for $k$ and $n$~large enough, we shall prove a simple condition under which a pair of $k$-neighbours must have at least~$2k - r - 2$ $k$-unaffiliated vertices. We shall deduce from this that each pair of $k$-neighbours in $G$ has almost enough $k$-unaffiliated vertices to achieve the desired bound on $e(G_k)$. Second, we shall deduce our key lemma, which says that if some pair of $k$-neighbours~$\{v, w\}$ does not have enough $k$-unaffiliated vertices, then all geodesics in $G$ must have only a few vertices apart from each of $P_v$~and~$P_w$. Third, we shall show that in this case, every other $k$-neighbour of~$v$ is at distance~$o(k)$ from $w$, and vice-versa.  Moreover, for some~$\delta = o(1)$, we shall show that all vertices that are at distance at least~$\delta k$ from both $v$ and $w$ have very few $k$-neighbours, which, by Tur\'an's theorem, will contradict our assumption that $e(G_k) \geq (n - k + 1)^2 / 4$. Finally, we shall deduce that in this case, $G$ has at most as many $k$-distances as the double broom, which will imply that $e(G_k) = (n - k + 1)^2 / 4$.  Moreover, we shall show that in this case
$G$ is $k$-isomorphic to the double broom.

\begin{lemma}\label{AffPtsGeodesic}
For every $\varepsilon > 0$ there exists a constant~$K(\varepsilon)$ such that for all~$k \geq K(\varepsilon)$, if some geodesic in $G$ contains $\varepsilon k$ vertices that are $k$-affiliated with either $v \in E(G_k)$ or~$w \in E(G_k)$, then we can find $2k$ vertices that are $k$-unaffiliated with both $v$~and~$w$.
\end{lemma}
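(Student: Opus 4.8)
The plan is to use the rich geodesic to build, inside the breadth‑first tree $T_v$, a subtree with very many vertices \emph{all of which lie within distance $k$ of $v$}, and then to convert its size into the required supply of vertices that are $k$‑unaffiliated with both $v$ and~$w$. Write $A_v$ and $A_w$ for the sets of $k$‑neighbours of $v$ and of~$w$; since $G_k$ is triangle‑free, $A_v\cap A_w=\emptyset$. We may assume $\varepsilon\le 1$ (a geodesic with $\varepsilon k$ affiliated vertices has at least $k$ of them when $\varepsilon\ge 1$). Of the $\varepsilon k$ affiliated vertices on the geodesic, at least $\varepsilon k/2$ are $k$‑affiliated with the same one of $v$,~$w$; renaming if necessary, fix $m:=\lceil\varepsilon k/2\rceil$ of them that are $k$‑neighbours of~$v$, say $u_1,\dots,u_m$, listed in the order in which they occur along the geodesic. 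Because they lie on a common geodesic, $d_G(u_i,u_j)\ge\abs{i-j}$ for all $i,j$. In $T_v$ each $u_j$ has depth exactly~$k$; let $T'$ be the smallest subtree of $T_v$ containing $v,u_1,\dots,u_m$, so $T'$ is the union of the $m$ geodesics of length~$k$ joining $v$ to the~$u_j$. (Using only the two extreme $u_j$ gives $\abs{T'}\ge k+\Omega(\varepsilon k)$, which falls short of $2k$; the point is that \emph{many} well‑spread $u_j$ force $T'$ to be far larger.)

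The core step is the bound $\abs{T'}\ge k+\Omega(m\log m)$. For $0\le d\le k$ let $b_d$ be the number of vertices of $T'$ at depth~$d$ from~$v$. For $d<k$ every vertex of $T'$ at depth~$d$ is a $T_v$‑ancestor of some~$u_j$, so the $u_j$ split into exactly $b_d$ nonempty classes according to their depth‑$d$ ancestor in~$T'$. If $u_i,u_j$ lie in one class they share an ancestor at depth~$d$, so $\abs{i-j}\le d_G(u_i,u_j)\le d_{T_v}(u_i,u_j)\le 2(k-d)$; hence each class contains at most $2(k-d)+1$ indices, and $b_d\ge m/(2(k-d)+1)$. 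Therefore
\[
\abs{T'}=\sum_{d=0}^{k}b_d\ \ge\ \sum_{j=0}^{k}\max\!\Bigl(1,\ \tfrac{m}{2j+1}\Bigr)\ \ge\ k+\Omega\bigl(m\log m\bigr),
\]
and since $m\ge\varepsilon k/2$ the right side exceeds, say, $10k$ once $k$ is large in terms of~$\varepsilon$; this is the source of the constant~$K(\varepsilon)$.

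Now harvest. The vertices of $T'$ at distance exactly~$k$ from~$v$ are precisely $u_1,\dots,u_m$: a vertex of $T'$ of maximal depth is a leaf of $T'$, hence a terminal, hence $v$ or some~$u_j$, so no vertex of $T'$ has depth exceeding~$k$, and every vertex of $T'$ other than the $u_j$ has depth $<k$. Thus at least $\abs{T'}-m\ge 9k$ vertices of $T'$ are $k$‑unaffiliated with~$v$, and since $T'\cap A_v=\{u_1,\dots,u_m\}$ is disjoint from~$A_w$, the number of vertices of $T'$ that are $k$‑unaffiliated with \emph{both} $v$ and~$w$ is at least $\abs{T'}-m-\abs{T'\cap A_w}$.

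The hard part — and where the real work lies — is controlling $\abs{T'\cap A_w}$: almost all of $\abs{T'}$ sits in the layers of $T'$ close to depth~$k$, and so far nothing prevents that ``bushy'' part from lying entirely on the $k$‑sphere about~$w$. I would split on $\abs{T'\cap A_w}$. If $\abs{T'\cap A_w}\le\abs{T'}-m-2k$ we are done. Otherwise $w$ has at least $\abs{T'}-m-2k\ge 7k$ $k$‑neighbours, all within distance $k-1$ of~$v$, which together with~$v$ form a large subset of the $G_k$‑independent set~$A_w$ confined to the ball of radius~$k$ about~$v$; one then reruns a branching analysis from~$T_w$ (first extracting from $T'\cap A_w$ a sub‑collection with enough ``spread'' to feed the layer count) or else contradicts the standing hypothesis $e(G_k)\ge(n-k+1)^2/4$ via the Tur\'an‑type estimate used at the end of the proof. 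This step is delicate because, unlike $u_1,\dots,u_m$, the vertices of $T'\cap A_w$ need not lie on a single geodesic, so obtaining the needed lower bound on the relevant subtree of $T_w$ requires extra care.
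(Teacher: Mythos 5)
Your subtree construction and the layer-counting bound $\abs{T'} \geq k + \Omega(m \log m)$ are sound (and in fact yield the same quantitative gain, of order $\varepsilon k \log (\varepsilon k)$, as the paper's count), but the proof has a genuine gap exactly where you flag it: nothing controls $\abs{T' \cap A_w}$. All you know about the harvested vertices is that they lie at distance less than~$k$ from~$v$; a priori almost all of them could lie at distance exactly~$k$ from~$w$, and neither of your fallbacks closes this. Rerunning the branching analysis from~$T_w$ fails because the bound $b_d \geq m/(2(k-d)+1)$ was powered by the spread $d_G(u_i,u_j) \geq \abs{i-j}$, which came from the $u_j$ lying on a single geodesic; the vertices of $T' \cap A_w$ have no such spread (they may sit inside a ball of bounded radius), so the subtree of~$T_w$ they span could have barely more than~$k$ vertices, and no useful count results. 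The alternative, a Tur\'an-type contradiction from ``$w$ has at least $7k$ $k$-neighbours within distance $k-1$ of~$v$'', is unsubstantiated: this configuration is not in tension with $e(G_k) \geq (n-k+1)^2/4$ --- in the extremal double broom every leaf has $k$-degree about $(n-k+1)/2 \gg k$ and all of its $k$-neighbours lie in a ball of radius~$2$. So the ``hard part'' you defer is precisely the content of the lemma, and the argument as given does not prove it.

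For comparison, the paper sidesteps this difficulty by constructing the unaffiliated vertices with their distance to~$w$ controlled from the outset: starting from the affiliated vertices $x_i$ (at distance~$k$ from~$w$, after the same pigeonhole step you use), it walks a prescribed number of steps ($2$, then $5$, then $8$, and so on) along shortest $x_i w$-paths, producing batches of vertices at distance exactly $k-2$, $k-5, \ldots$ from~$w$ --- never~$k$ --- and then repairs any accidental affiliation with~$v$ by moving one step towards~$v$, which keeps the distance to~$w$ inside a window avoiding~$k$. A $(6\ell+1)$-colouring of the geodesic keeps each batch pairwise at distance at least~$3$, so the one-step perturbation cannot merge vertices, and different batches occupy disjoint distance windows from~$w$, so the counts add to roughly $\tfrac{\varepsilon k}{2}\log\tfrac{\varepsilon k}{2} > 2k$. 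Some mechanism of this kind, steering the harvested vertices off the $k$-sphere about~$w$ while preserving distinctness, is what your argument is missing; note that a naive one-step perturbation inside~$T'$ would not do, since vertices of~$T'$ can be mutually adjacent and shifting them can create collisions or new affiliations with~$v$.
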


\begin{proof}
Suppose that $Q$ is a geodesic and that $\varepsilon k$ vertices of~$Q$ are $k$-affiliated with either $v$ or~$w$. By the pigeonhole principle, we can assume that $m\geq \varepsilon k/2$ of them are at distance~$k$ from $w$. Index them consecutively by $x_1$, $\ldots\,$,~$x_m$.  For $1 \leq i \leq m$, let $Q_i$ denote a shortest path in $G$ from $x_i$ to~$w$.  Let us $7$-colour the vertices of~$Q$ so that $x_i$ is coloured with colour~$j$ if $i \equiv j \bmod{7}$ and choose a colour (red, say) that belongs to at least~$m/7$ of the $x_i$.  For each red~$x_i$, move two steps along the path~$Q_i$. In this way, we obtain $m' \geq m/7$ vertices at distance~$k - 2$ from $w$ and at distance at least~$7 - 2 - 2 = 3$ from each other; let us call them $y_1^1$,~$y_2^1$, $\ldots\,$,~$y_{m'}^1$. If a vertex~$y_i^1$ is not at distance~$k$ from $v$, set $z_i^1 = y_i^1$. If $y_i^1$ is at distance~$k$ from $v$, take $z_i^1$ to be the vertex obtained by moving from $y_i^1$ one step towards $v$ (see Figure~\ref{fig:geodesic}). Since the $y_i^1$ were at pairwise distance at least~$3$, all of the $z_i^1$ will be distinct vertices at distance between $k - 1$~and~$k - 3$ from $w$ and not at distance~$k$ from $v$, i.e., they are $k$-unaffiliated with both $v$~and~$w$.

\begin{figure}
\centering
\includegraphics{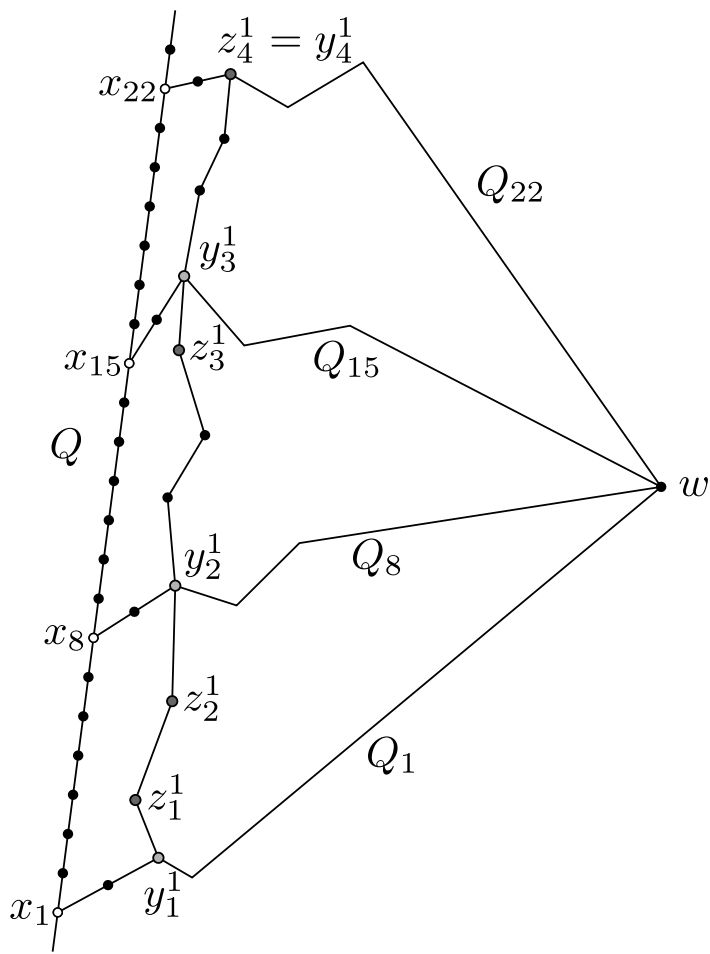}
\caption[The set of vertices that are $k$-unaffiliated with both $v$~and~$w$]{The set of vertices~$z_i^{1}$ that are $k$-unaffiliated with both $v$~and~$w$.}\label{fig:geodesic}
\end{figure}

Similarly, by 13-colouring $Q$, defining `red' to be the largest colour class, and moving 5 steps along the paths~$Q_i$, we find $m''\geq m/13$ vertices $y_1^2$,~$y_2^2$, $\ldots\,$,~$y_{m''}^2$ at distance~$k - 5$ from $w$ and at distance at least~$13 - 5 - 5 = 3$ from each other. This gives rise to~$m/13$ distinct $k$-unaffiliated vertices~$z_i^2$ at distance between $k - 4$~and~$k - 6$ from $w$ and not at distance~$k$ from $v$, i.e., the $z_i^2$ are disjoint from the previously constructed~$z_i^1$. 

Repeating this procedure for all $6\ell + 1$-colourings up to $\ell = \lfloor m/6\rfloor$, we obtain a total of at least
\[\left(\dfrac{1}{7} + \dfrac{1}{13} + \cdots + \dfrac{1}{6\left\lfloor m/6\right\rfloor + 1}\right)m \geq \dfrac{1}{12}\cdot\dfrac{\varepsilon k}{2}\log \dfrac{\varepsilon k}{2}\]
$k$-unaffiliated vertices, which is greater than~$2k$ for $k$~large enough.
\end{proof}

Lemma~\ref{AffPtsGeodesic} has the following important corollary.

\begin{corollary}\label{PvManyUnaff}
Let $\{v,w\}\in G_k$.  Let $P_v$ and $P_w$ be geodesics as defined above.  Then either $\lvert P_v \rvert - \varepsilon k \geq (2k - r) - \varepsilon k$ vertices on $P_v$ are $k$-unaffiliated with $v$~and~$w$ or we can find $2k$ vertices that are $k$-unaffiliated with $v$~and~$w$ elsewhere.  The same is true of~$P_w$. \qed
\end{corollary}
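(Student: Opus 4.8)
The plan is to derive the corollary directly from Lemma~\ref{AffPtsGeodesic}, applied not to $P_v$ itself (which is only a concatenation of two geodesics, not a single one) but to each of its two geodesic pieces, and symmetrically for $P_w$. Recall from Section~\ref{prelim} that $P_v$, the longest path in the breadth-first tree $T_v$, splits as $P_v = P_1 \cup P_2$, where $P_1$ and $P_2$ are geodesics of $G$ meeting at the vertex $z \in P_v$ nearest to $v$ (one of $P_1$, $P_2$ may be trivial), and that $\abs{P_v} \ge 2k - r$ by Lemma~\ref{SpTreePathLen}, using $r < k - 1$.

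Given $\varepsilon > 0$, I would take $k$ at least the constant $K(\varepsilon/2)$ supplied by Lemma~\ref{AffPtsGeodesic} and apply that lemma, with parameter $\varepsilon/2$, to each of the geodesics $P_1$ and $P_2$. If either $P_1$ or $P_2$ contains at least $\varepsilon k / 2$ vertices that are $k$-affiliated with $v$ or with $w$, then Lemma~\ref{AffPtsGeodesic} yields $2k$ vertices $k$-unaffiliated with both $v$ and $w$, which is the second alternative of the corollary. Otherwise, each of $P_1$ and $P_2$ contains fewer than $\varepsilon k / 2$ vertices $k$-affiliated with $v$ or $w$, so $P_v$ contains fewer than $\varepsilon k$ of them; hence at least $\abs{P_v} - \varepsilon k \ge (2k - r) - \varepsilon k$ vertices of $P_v$ are $k$-unaffiliated with both $v$ and $w$, which is the first alternative. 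Replacing $T_v$ by $T_w$ gives the corresponding statement for $P_w$.

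The only points that need care are routine. One is that $P_1$ and $P_2$ really are geodesics of $G$, so that Lemma~\ref{AffPtsGeodesic} applies to them: this is precisely the decomposition of $P_v$ recorded in Section~\ref{prelim}, which holds because depth in a breadth-first tree equals distance to the root, so $z$ is the lowest common ancestor of the endpoints of $P_v$ in $T_v$ and each $P_i$ is a subpath of a root-to-endpoint geodesic of $T_v$. The other is the loss of a constant factor in $\varepsilon$, which is harmless since the corollary is quantified over all $\varepsilon > 0$ (one simply takes the corollary's constant to be that of Lemma~\ref{AffPtsGeodesic} at $\varepsilon/2$). There is no genuine obstacle here: all the work sits in Lemma~\ref{AffPtsGeodesic}, and this corollary is just its application to $P_v$ and $P_w$.
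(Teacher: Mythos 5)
Your proposal is correct and is essentially the argument the paper intends: the corollary is stated as an immediate consequence of Lemma~\ref{AffPtsGeodesic}, using the decomposition of $P_v$ into the two geodesics $P_1$ and $P_2$ and the bound $\abs{P_v} \geq 2k - r$ from Lemma~\ref{SpTreePathLen}. Your only addition is making the pigeonhole step and the harmless $\varepsilon/2$ adjustment explicit, which the paper leaves implicit.
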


Corollary~\ref{PvManyUnaff} and equation~\eqref{eq:CSintunaff} have the following immediate consequence, which is an approximate version of the bound in Theorem~\ref{thm:Dbroom}.

\begin{corollary}\label{ApproxBound}
For every $\varepsilon > 0$ there is a constant~$K(\varepsilon)$ such that for all~$k \geq K(\varepsilon)$, if $G_k$ is triangle-free then
\[e(G_k)\leq \dfrac{\bigl(n - (1 - \varepsilon)k\bigr)^2}{4}. \tag*{\qedsymbol}
\]
\end{corollary}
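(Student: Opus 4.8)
The plan is to combine Corollary~\ref{PvManyUnaff} with the bound~\eqref{eq:CSintunaff}; almost all of the work has already been done, so this is essentially a bookkeeping step. First I would dispose of the degenerate range $r \geq k - 1$. In that case, applying~\eqref{eq:CSint} with the value $k - 1$ in place of~$r$ already yields $e(G_k) \leq (n - k + 1)^2/4$, which is at most $(n - (1 - \varepsilon)k)^2/4$ as soon as $k \geq 1/\varepsilon$ (with the usual convention that the bound is vacuous when $n < k$, since then $G_k$ has no edges at all). So from now on one may assume $r < k - 1$, which is exactly the running hypothesis under which the material of Section~\ref{prelim} and Corollary~\ref{PvManyUnaff} are valid.

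Next, fix $\varepsilon > 0$ and let $K(\varepsilon)$ be the constant supplied by Corollary~\ref{PvManyUnaff} (which in turn comes from Lemma~\ref{AffPtsGeodesic}), enlarged if necessary so that $K(\varepsilon) \geq 1/\varepsilon$. Let $\{v, w\}$ be an arbitrary edge of~$G_k$. Corollary~\ref{PvManyUnaff} says that one of two things holds: either at least $\abs{P_v} - \varepsilon k \geq (2k - r) - \varepsilon k$ vertices of~$P_v$ are $k$-unaffiliated with both $v$~and~$w$, or there are $2k$ further vertices, lying elsewhere, with the same property. Since $(2 - \varepsilon)k - r \leq 2k$, in either case the number~$p$ of vertices $k$-unaffiliated with the pair $\{v, w\}$ satisfies $p \geq (2 - \varepsilon)k - r$. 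As this holds for every edge of~$G_k$, we may use this value of~$p$ in~\eqref{eq:CSintunaff}.

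It then remains only to read off the conclusion. Using $r + p \geq (2 - \varepsilon)k$ in~\eqref{eq:CSintunaff} gives
\[
e(G_k) \leq \frac{\bigl(n - \tfrac{r + p}{2}\bigr)^2}{4} \leq \frac{\bigl(n - (1 - \tfrac{\varepsilon}{2})k\bigr)^2}{4} \leq \frac{\bigl(n - (1 - \varepsilon)k\bigr)^2}{4},
\]
which is the asserted bound; in fact one obtains the slightly stronger statement with $(1 - \tfrac{\varepsilon}{2})k$ in place of $(1 - \varepsilon)k$. I do not expect a genuine obstacle here — the substance is in Lemma~\ref{AffPtsGeodesic} and Corollary~\ref{PvManyUnaff} — and the only two points needing a moment's care are separating off the case $r \geq k - 1$ (done above) and checking that $n - \tfrac{r + p}{2}$ is nonnegative so that squaring preserves the last inequality, which is automatic since both $r$ and $p$ are at most~$n$.
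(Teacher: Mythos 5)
Your proposal is correct and follows exactly the route the paper intends: the corollary is stated as an immediate consequence of Corollary~\ref{PvManyUnaff} combined with~\eqref{eq:CSintunaff} (with the case $r \geq k-1$ already disposed of via~\eqref{eq:CSint} in Section~\ref{prelim}), which is precisely your bookkeeping argument giving $p \geq (2-\varepsilon)k - r$ and hence $e(G_k) \leq \bigl(n - (1 - \varepsilon/2)k\bigr)^2/4$.
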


Lemma~\ref{AffPtsGeodesic} and Corollary~\ref{PvManyUnaff} also give us very useful information about the structure of the graph.  The following lemma is the main tool in the remainder of the proof of Theorem~\ref{thm:Dbroom}.

\begin{lemma}\label{GeodesicDisj}
Let $\varepsilon > 0$ and let $k \geq K(\varepsilon)$, where $K(\varepsilon)$ is as in Lemma~\ref{AffPtsGeodesic}.  Suppose that $v$ and $w$ are $k$-neighbours in a graph~$G$ that have fewer than~$2k - r - 2$ $k$-unaffiliated vertices. Then any geodesic in $G$ contains fewer than~$2\varepsilon k$ vertices disjoint from~$P_v$, and similarly for $P_w$.
\end{lemma}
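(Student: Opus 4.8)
The plan is to argue by contradiction. Suppose that $Q$ is a geodesic in $G$ containing at least~$2\varepsilon k$ vertices not lying on~$P_v$; we want to produce $2k$ vertices that are $k$-unaffiliated with both $v$~and~$w$, contradicting the hypothesis that $\{v,w\}$ has fewer than~$2k-r-2$ such vertices (since $2k \geq 2k - r - 2$ for $k$ large). By Lemma~\ref{AffPtsGeodesic}, if a geodesic contains $\varepsilon k$ vertices that are $k$-affiliated with $v$~or~$w$, we already find $2k$ such vertices and are done. So I may assume that at most~$\varepsilon k$ vertices of~$Q$ are $k$-affiliated with $v$~or~$w$; hence at least~$\varepsilon k$ of the $2\varepsilon k$ off-$P_v$ vertices of~$Q$ are $k$-unaffiliated with both $v$~and~$w$. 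The point is that these vertices are automatically ``new'': they do not lie on $P_v$, so they cannot be double-counted against the at least~$(2k-r)-\varepsilon k$ $k$-unaffiliated vertices that Corollary~\ref{PvManyUnaff} locates on~$P_v$.

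First I would set up the counting carefully. By Corollary~\ref{PvManyUnaff} applied to the pair~$\{v,w\}$, either we already have $2k$ $k$-unaffiliated vertices (and are done), or at least~$(2k-r)-\varepsilon k$ vertices of~$P_v$ are $k$-unaffiliated with both $v$~and~$w$. Combining the latter set with the at least~$\varepsilon k$ off-$P_v$ $k$-unaffiliated vertices supplied by~$Q$ in the previous paragraph, and using that these two families are disjoint by construction, we obtain at least
\[
(2k - r) - \varepsilon k + \varepsilon k = 2k - r
\]
vertices that are $k$-unaffiliated with both $v$~and~$w$. Since $r \geq 2$, and in any case $r \geq 0$, this is at least~$2k - r > 2k - r - 2$, contradicting the assumption on~$\{v,w\}$. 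Hence no geodesic can contain $2\varepsilon k$ vertices off~$P_v$. The identical argument with $P_w$ in place of~$P_v$ gives the corresponding statement for~$P_w$.

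The main obstacle is making sure the two families of $k$-unaffiliated vertices genuinely do not overlap and that the pigeonhole split in the first paragraph is legitimate: a vertex of~$Q$ off~$P_v$ is distinct from every vertex on~$P_v$ by definition, so disjointness is immediate, but one has to be slightly careful that the ``$\varepsilon k$ vertices of $Q$ affiliated with $v$ or $w$'' threshold of Lemma~\ref{AffPtsGeodesic} is exactly what lets us pass from $2\varepsilon k$ off-$P_v$ vertices to $\varepsilon k$ of them being $k$-unaffiliated with both. One also needs $k \geq K(\varepsilon)$ so that Lemma~\ref{AffPtsGeodesic} and Corollary~\ref{PvManyUnaff} apply and so that $2k$ exceeds the relevant thresholds; this is subsumed in the hypothesis. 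Everything else is bookkeeping.
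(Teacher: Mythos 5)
Your proposal is correct and follows essentially the same route as the paper: split into the case where at least~$\varepsilon k$ of the off-$P_v$ vertices of~$Q$ are $k$-affiliated with $v$ or~$w$ (apply Lemma~\ref{AffPtsGeodesic}) and the case where at least~$\varepsilon k$ are $k$-unaffiliated (combine with Corollary~\ref{PvManyUnaff}), reaching a contradiction with the assumption on~$\{v,w\}$ either way. Your version is in fact slightly more careful than the paper's, which loosely claims ``at least~$2k$'' unaffiliated vertices in the first case where the combined count is really $2k-r$; as you note, this still exceeds $2k-r-2$, so nothing is lost.
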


\begin{proof}
Suppose that $Q$ is a geodesic in $G$ with at least~$2\varepsilon k$ vertices disjoint from~$P_v$. First, if $\varepsilon k$ of these vertices are $k$-unaffiliated with $v$~and~$w$, then, by Corollary~\ref{PvManyUnaff}, $v$ and $w$ have at least~$2k \geq 2k - r - 2$ $k$-unaffiliated vertices. Second, if not, then $Q \setminus P_v$ must contain at least~$\varepsilon k$ vertices that are $k$-affiliated with $v$~and~$w$. In this case, by Lemma~\ref{AffPtsGeodesic}, we again obtain at least~$2k \geq 2k - r - 2$ $k$-unaffiliated vertices. In either case, we reach a contradiction.
\end{proof}

Let $P$ denote the $vw$-path and let $P'$ denote the $vw'$-path, as defined in Section~\ref{prelim}.  Recall that the $v$-path~$P_v$ splits into two geodesics, which we call $P_1$ and $P_2$, along each of which the distance from $v$ is strictly monotone. Similarly, the $w$-path~$P_w$ splits into two geodesics, which we call $P_3$ and $P_4$.  Note that since $P_v$ was defined on a tree~$T_v$ that contains $P$, we have that $P_v\cap P$ is an interval of~$P$, lying entirely in $P_1$ or in $P_2$, and analogously for $P_w \cap P$.  For the remainder of the proof, without loss of generality, let $P_v \cap P \subseteq P_1$ and let $P_w \cap P \subseteq P_3$.

As was already mentioned in Section~\ref{prelim}, by Lemma~\ref{CS} we are done if any pair of $k$-neighbours has $2k - r - 2$ $k$-unaffiliated vertices. So let us assume for the sake of contradiction that some vertices $v$ and~$w$ at distance~$k$ have fewer than that many $k$-unaffiliated vertices. Let $w'$ be another $k$-neighbour of~$v$. How large can the distance between $w$ and $w'$ be? The following lemma shows that this distances is either close to~$2k$ or close to~$0$.

\begin{lemma}\label{distalt}
Let $\varepsilon > 0$ and let $k \geq K(\varepsilon)$, where $K(\varepsilon)$ is as in Lemma~\ref{AffPtsGeodesic}.  Let $v$ and $w$ be $k$-neighbours with fewer than~$2k - r - 2$ $k$-unaffiliated vertices.  Let $w'$ be another $k$-neighbour of~$v$.  Then either $d(w,w') = (2 - o(1))k$ or $d(w,w') = o(k)$.
\end{lemma}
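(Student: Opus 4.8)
The plan is to locate $w'$ precisely relative to the $v$-path $P_v$ and then to read off $d(w,w')$ up to an error of $o(k)$. The starting point is Lemma~\ref{GeodesicDisj}: since $v$ and $w$ have fewer than $2k-r-2$ $k$-unaffiliated vertices, every geodesic of $G$ has all but fewer than $2\varepsilon k$ of its vertices on $P_v$. First I would apply this to the $vw$-path $P$. Recalling that $P\cap P_v$ is an interval of $P$ lying in $P_1$ and that $\lvert P_v\rvert\geq 2k-r$, it follows that the vertex $z$ of $P_v$ nearest $v$ satisfies $d(v,z)<2\varepsilon k$, so $z$ is a ``near-centre'' with $d(z,u)=d(v,u)\pm 2\varepsilon k$ for every $u$; moreover the endpoint $p_j$ of $P\cap P_v$ nearest $w$ is a vertex of $P_1$ with $d(v,p_j)\in(k-2\varepsilon k,k]$ and $d(w,p_j)<2\varepsilon k$, so in particular $P_1$ reaches $G$-distance $>k-2\varepsilon k$ from $v$. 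Parametrising $P_1$ and $P_2$ by distance from $z$ (along these geodesics $d(z,\cdot)=d(v,\cdot)-d(v,z)$), the vertex $p_j$ sits at parameter $(1\pm o(1))k$ on the $P_1$-arm.

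Next I would run the same analysis on the $vw'$-geodesic $P'$. By Lemma~\ref{GeodesicDisj} again all but fewer than $2\varepsilon k$ vertices of $P'$ lie on $P_v$, so the last vertex $q$ of $P'$ that lies on $P_v$ has $d(w',q)<2\varepsilon k$ and $d(v,q)\in(k-2\varepsilon k,k]$; thus $q$ sits at parameter $(1\pm o(1))k$ on $P_1$ or on $P_2$. If $q\in P_1$, then $p_j$ and $q$ lie on the single geodesic $P_1$ at parameters differing by $O(\varepsilon k)$, so $d(p_j,q)=O(\varepsilon k)$ and hence $d(w,w')\leq d(w,p_j)+d(p_j,q)+d(q,w')=O(\varepsilon k)=o(k)$. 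If $q\in P_2$, then $w$ is within $o(k)$ of parameter $\approx k$ on the $P_1$-arm and $w'$ within $o(k)$ of parameter $\approx k$ on the $P_2$-arm, and the claim is that $d(w,w')=(2-o(1))k$. The upper bound is immediate from $d(w,w')\leq d(w,v)+d(v,w')=2k$. For the lower bound take a geodesic $R$ from $w$ to $w'$; by Lemma~\ref{GeodesicDisj} all but $<2\varepsilon k$ of its vertices lie on $P_v=P_1\cup P_2$, and since $P_1$ and $P_2$ are geodesics meeting only in $z$, I would argue that $R$ cannot pass from the $P_1$-arm to the $P_2$-arm without containing a vertex $z^{\ast}$ with $d(z,z^{\ast})=o(k)$. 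Granting this, $d(w,w')=d(w,z^{\ast})+d(z^{\ast},w')\geq\bigl(d(v,w)-o(k)\bigr)+\bigl(d(v,w')-o(k)\bigr)=(2-o(1))k$, and combining the two cases $q\in P_1$ and $q\in P_2$ gives exactly the stated dichotomy. (Alternatively one could run the symmetric analysis using the $w$-path $P_w=P_3\cup P_4$ and the near-centre $z'$ of $P_w$, which is available since $v$ and $w$ play symmetric roles.)

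The step I expect to be the real obstacle is exactly the crossing claim in the case $q\in P_2$: it is not enough that $R$ lies mostly on $P_v$, one must show that $R$ genuinely has to go near $z$, i.e.\ that there is no short crossing. Two kinds of short crossing must be excluded: a detour of $R$ off $P_v$ (which is harmless, being limited to the fewer than $2\varepsilon k$ off-$P_v$ vertices of $R$) and, more seriously, a ``medium-height'' edge $u\sim u'$ with $u\in P_1$, $u'\in P_2$ and $d(z,u),d(z,u')$ both of order $k$ --- such an edge would let $R$ leave the $P_1$-arm at mid-height and yield $d(w,w')\approx k$. I would rule these out either by feeding such a configuration back into Lemma~\ref{AffPtsGeodesic} and Corollary~\ref{PvManyUnaff} to produce $2k$ vertices $k$-unaffiliated with both $v$ and $w$, contradicting the standing hypothesis on $\{v,w\}$, or by using the resulting near-equilateral triple $v,w,w'$ to perturb along geodesics and locate three vertices at pairwise distance exactly $k$, contradicting the triangle-freeness of $G_k$; I would pursue whichever of these goes through cleanly. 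Once medium-height cross-edges are excluded, the crossing claim follows and the proof is complete.
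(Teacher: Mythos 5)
Your first case ($q\in P_1$, the analogue of the paper's $u'\in P_1$) is fine and matches the paper's argument. The genuine gap is exactly where you predicted it, but it is worse than a missing technical step: the claim you would need --- that $q\in P_2$ forces $d(w,w')=(2-o(1))k$ via a geodesic $R$ from $w$ to $w'$ having to pass near the centre $z$ of $P_v$ --- is not provable from the standing hypotheses, because it is not true in that generality. The paper's own treatment of the case $u'\in P_2$ splits further according to the length of the second arm $P_4$ of the $w$-path $P_w=P_3\cup P_4$: if $\lvert P_4\rvert>7\varepsilon k$, then $P_4\setminus P_2$ being small forces $P_2\cap P_4\neq\emptyset$, and the vertex of $P_2\cap P_4$ nearest $w$ is within $O(\varepsilon k)$ of $w$ and within $O(\varepsilon k)$ of $u'$, so $d(w,w')=O(\varepsilon k)$ \emph{even though} $u'\in P_2$. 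Geometrically this is the situation where the two arms of $P_v$ come close together near their far ends (think of a long cycle through $v$ and $w$): then a $w w'$-geodesic crosses between the arms at height $\approx k$ without ever approaching $z$, so your crossing claim fails, and the correct conclusion there is the $o(k)$ horn of the dichotomy, not a contradiction. Consequently both of your proposed repairs are doomed: you cannot manufacture $2k$ $k$-unaffiliated vertices or a triangle in $G_k$ out of a ``medium-height'' or high cross connection, because such configurations are consistent with all the hypotheses of the lemma; and the suggested perturbation of the near-equilateral triple $v,w,w'$ into three vertices at pairwise distance exactly $k$ has no mechanism behind it (adjusting one pairwise distance along a geodesic disturbs the others).

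What the paper does instead, and what your outline is missing, is to bring in the second breadth-first structure: the $w$-path $P_w=P_3\cup P_4$ with $P\cap P_w\subseteq P_3$, together with Lemma~\ref{GeodesicDisj} applied to $P_3$ and $P_4$ against $P_v$. The dichotomy is then read off from $\lvert P_4\rvert$: if $P_4$ is long it must run along $P_2$, placing $w$ within $O(\varepsilon k)$ of the top of $P_2$ and hence of $w'$ (distance $o(k)$); if $P_4$ is short, then $\lvert P_2\cap P_3\rvert\geq k-O(\varepsilon k)$, so $P_3$ runs along $P$ and far up $P_2$, which forces $d(q',w)\geq 2k-O(\varepsilon k)$ for the endpoint $q'$ of $P_2\cap P_3$ farthest from $w$; since $q'$ is within $O(\varepsilon k)$ of $u'$ and hence of $w'$, this gives $d(w,w')\geq 2k-O(\varepsilon k)$. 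In short, the case distinction that actually decides between the two horns is not ``$q\in P_1$ versus $q\in P_2$'' but the shape of $P_w$, and without that ingredient your argument for the lower bound cannot be completed.
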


\begin{proof}
Recall that $P_v$ and $P_w$ denote the $v$-path and the $w$-path, respectively. By Corollary~\ref{PvManyUnaff}, $P_v$ contains at least~$\lvert P_v \rvert - \varepsilon k \geq 2k - r - \varepsilon k$ $k$-unaffiliated vertices.  Since, by assumption, $v$ and $w$ have at most~$2k - r - 2$ $k$-unaffiliated vertices, by Lemma~\ref{GeodesicDisj}, we must have
\[
\lvert P_v \rvert \leq 2k - r + \varepsilon k.
\]
Observe that $G$ contains at most~$\varepsilon k$ vertices that are $k$-unaffiliated with $v$~and~$w$ and are not on $P_v$, or else we are done by Corollary~\ref{PvManyUnaff}. The same assertions as above hold for $P_w$ in place of~$P_v$.  By Lemma~\ref{GeodesicDisj}, for the vertex sets of the paths,
\begin{equation}\label{eq:GeodesicsBd}
\lvert P \cap P_v \rvert \geq (1 - 2\varepsilon)k \text{ and } \lvert P \cap P_w \rvert \geq (1 - 2\varepsilon)k.
\end{equation}

Let $u$ be the point furthest from $v$ at which $P$ and $P_v$ coincide.  By hypothesis, $u \in P_1$.  Similarly, let $u'$ be the furthest point from $v$ at which $P'$ and $P_v$ coincide.  By Lemma~\ref{GeodesicDisj}, $d(u,w)\leq 2 \varepsilon k$ and $d(u',w')\leq 2 \varepsilon k$.  It follows that
\begin{equation}\label{eq:disttovbds}
(1 - 2\varepsilon)k \leq d(v,u), d(v,u') \leq k.
\end{equation}
Now we consider two cases: when $u' \in P_1$ and when $u' \in P_2$.

Suppose that $u'$ lies on $P_1$.  Suppose first that $u$ is closer to~$v$ than $u'$ is (see Figure~\ref{fig:distalt1}).  Then $d(u,w') = d(u,w) \leq 2\varepsilon k$, thus,
\[
d(w,w') \leq d(w,u) + d(u,w') \leq 4\varepsilon k.
\]
If $u'$ is closer to~$v$ than $u$ is, then by a similar argument, $d(u',w) = d(u',w') \leq 2\varepsilon k$, and so
\[
d(w,w') \leq d(w,u') + d(u',w') \leq 4\varepsilon k.
\]

\begin{figure}
\centering
\includegraphics{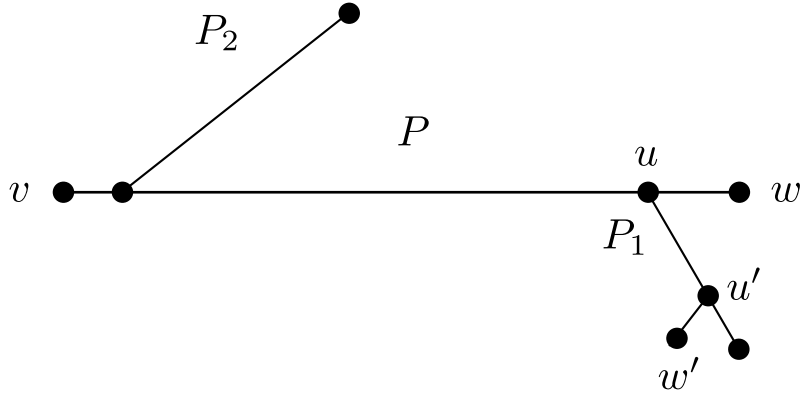}
\caption[The value of~$d(w,w')$ when $u' \in P_1$]{When $u' \in P_1$, all $k$-neighbours of~$v$ are close together.  The dashed segments represent the path~$P_1$.}\label{fig:distalt1}
\end{figure}

If, however, $u' \in P_2$, then $d(w, w')$ depends on the length of~$P_w \setminus P$.  Because $u' \in P_2$, we have $\lvert P_2 \rvert \geq (1-2\varepsilon)k$.  Since $\lvert P_1 \rvert + \lvert P_2 \rvert = \lvert P_v \rvert + 1 \leq 2k + \varepsilon k$ (and similarly for $\abs{P_3},~\abs{P_4}$ and~$\abs{P_w}$), we have
\begin{equation}\label{eq:Pilength}
(1 - 2\varepsilon)k \leq \lvert P_i \rvert \leq (1 + 3\varepsilon)k \text{ for } i=1,2.
\end{equation}
Now we consider the geodesics $P_3$ and~$P_4$ that comprise $P_w$.  We have assumed that $P_3$ contains $P_w\cap P$. Then, because $P_4 \cap P = \emptyset$, we have $P_1 \cap P_4 \subseteq P_1 \setminus P$, hence,
\[
\lvert P_1\cap P_4 \rvert \leq \lvert P_1\setminus P \rvert \leq (1 + 3\varepsilon)k - (1 - 2\varepsilon)k = 5\varepsilon k.
\] 
Thus, by Lemma~\ref{GeodesicDisj},
\[
\lvert P_4 \setminus P_2 \rvert = \lvert P_4\cap P_1 \rvert + \lvert P_4 \setminus P_v \rvert \leq 5\varepsilon k + 2\varepsilon k = 7 \varepsilon k.
\] 

Now we shall show that if $\abs{P_4}$ is at all large, then $d(w, w') = o(k)$, while if $\abs{P_4}$ is very small, then $d(w, w') = 2k - o(k)$.

\emph{Case 1:} Suppose first that $\lvert P_4 \rvert > 7 \varepsilon k$. Then, because $\abs{P_4 \setminus P_2} \leq 7\varepsilon k$, we have $P_2 \cap P_4 \neq \emptyset$.  Let $t$ denote the vertex at which $P_4$ meets $P$.  Then by Lemma~\ref{GeodesicDisj}, we have $d(t, w) < 2\varepsilon k$.  Let $q$ be the vertex of~$P_2\cap P_4$ that is closest to~$w$ (see Figure~\ref{fig:distalt2}).  Then $d(q,w) \leq \abs{P_4 \setminus P_2} + d(t, w) < 9 \varepsilon k$.  Then
\[
k + 9\varepsilon k \geq d(q,v)\geq k - 9\varepsilon k.
\]

\begin{figure}
\centering
\includegraphics{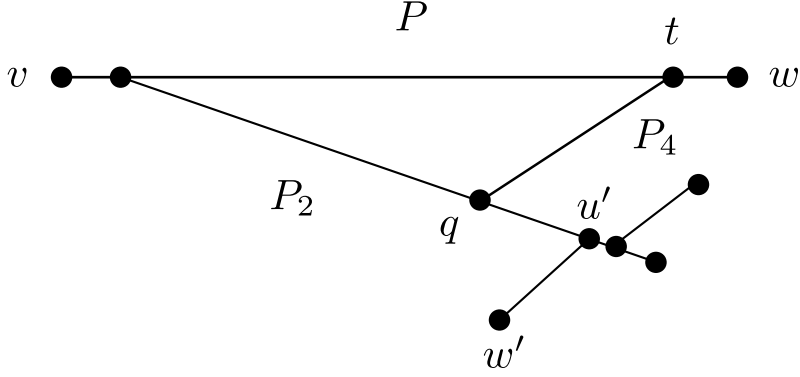}
\caption[The value of~$d(w,w')$ when $u' \in P_2$, first case]{When $u' \in P_2$ and $\abs{P_4}$ is large, all $k$-neighbours of~$v$ are again close together.  The dashed segments represent the path~$P_4$.  The vertices $a$ and~$b$ denote the endpoints of~$P_2$.}\label{fig:distalt2}
\end{figure}

Now we bound $d(q, u')$.  The bound depends on the location of~$q$ relative to~$u'$.  If $q$ is farther away from $v$ than $u'$ is, then by~\eqref{eq:Pilength},
\[
d(q,v) \leq \abs{P_2} - d(q,u') \leq (1 + 3\varepsilon)k - (1 - 2\varepsilon)k = 5\varepsilon k.
\]
If, however, $q$ is closer to~$v$ than $u'$ is, then the fact that $d(q, v) \geq k - 9\varepsilon k$ implies that $d(q, u') < d(q, w') \leq 9\varepsilon k$.  Thus, $d(q,u')\leq 9 \varepsilon k$ and $d(q,w')\leq 11 \varepsilon k$. We therefore have 
\[
d(w,w') \leq d(w,q) + d(q,w') \leq 9\varepsilon k + 11\varepsilon k = 20\varepsilon k,
\]
which completes the proof of Case 1.

\emph{Case 2:} Suppose instead that $\lvert P_4 \rvert \leq 7 \varepsilon k$. Then, by~\eqref{eq:GeodesicsBd} and Lemma~\ref{GeodesicDisj}, we have
\begin{equation}\label{eq:P2P3bd}
\lvert P_2 \cap P_3 \rvert = \lvert P_2 \rvert - \lvert P_2 \setminus P_w \rvert - \lvert P_2 \cap P_4 \rvert \geq (k - 2\varepsilon k) - 2\varepsilon k - 7\varepsilon k = k - 11\varepsilon k.
\end{equation}
Since $P_3$ contains $P_w \cap P$ and $P_2$ is edge-disjoint from~$P$, it follows from~\eqref{eq:GeodesicsBd} and~\eqref{eq:P2P3bd} that
\begin{equation}\label{eq:P3partsbd}
\lvert P_2 \cap P_3 \rvert + \lvert P \cap P_3 \rvert \geq 2k - 13\varepsilon k.
\end{equation}
Let $q'$ be the vertex of~$P_2 \cap P_3$ that is furthest away from $w$ (see Figure~\ref{fig:distalt3}).  Then, by~\eqref{eq:Pilength} and~\eqref{eq:P3partsbd},
\begin{equation}\label{eq:wtoP2bd}
2k + 3\varepsilon k \geq d(q', v) + d(v, w) \geq d(q',w) \geq \lvert P_2 \cap P_3 \rvert + \lvert P \cap P_3 \rvert \geq 2k - 13\varepsilon k.
\end{equation}
Also, it follows from~\eqref{eq:Pilength} and~\eqref{eq:P2P3bd} that
\[
k + 3 \varepsilon k \geq \abs{P_2} \geq d(q',v) \geq \abs{P_2 \cap P_3} \geq k - 11 \varepsilon k.
\]
It follows from this and~\eqref{eq:disttovbds} that $d(q',u')\leq 13\varepsilon k$ and therefore that $d(q',w') \leq 15\varepsilon k$. We obtain from this and~\eqref{eq:wtoP2bd} that
\[
d(w,w') \geq d(q',w) - d(q',w') \geq 2k - 28 \varepsilon k.
\]
This proves the lemma.
\end{proof}

\begin{figure}
\centering
\includegraphics{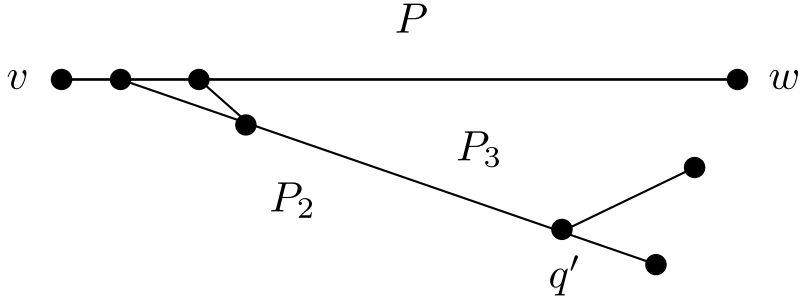}
\caption[The value of~$d(w,w')$ when $u' \in P_2$, second case]{When $u' \in P_2$ and $\abs{P_4}$ is small, all $k$-neighbours of~$v$ are at distance~$(2 - o(1))k$ from one another.  The dashed segments represent the portion of the path~$P_3$ that is disjoint from~$P$.  The vertices $a$ and~$b$ denote the endpoints of~$P_2$.}\label{fig:distalt3}
\end{figure}

We shall now show that the assumption that $d(w,w') = (2 - o(1))k$ for some $w'$ leads to a contradiction.  Let $\delta = O(\varepsilon) = o(1)$.  For $v \in V(G)$, define the {\em cluster} of~$v$ to be the set~$\cluster_v$ of vertices at distance at most~$\delta k$ from $v$.

\begin{lemma}\label{clusters}
Fix $\varepsilon > 0$.  For $k$ and $n$~large enough, let $G$ be a graph on $n$ vertices and let $v$ and $w$ be $k$-neighbours with fewer than~$2k - r - 2$ $k$-unaffiliated vertices.  Then every $k$-neighbour of~$v$ is at distance~$O(\varepsilon k) = o(k)$ from $w$, and vice versa.
\end{lemma}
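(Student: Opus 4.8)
The plan is to argue by contradiction. By Lemma~\ref{distalt}, every $k$-neighbour~$w'$ of~$v$ satisfies either $d(w,w')=O(\varepsilon k)$ or $d(w,w')=(2-O(\varepsilon))k$, so it is enough to rule out the second alternative; the corresponding statement for the $k$-neighbours of~$w$ then follows by the symmetric argument, interchanging the roles of $v$ and~$w$ and of $P_v$ and~$P_w$. So suppose, for contradiction, that some $k$-neighbour~$w'$ of~$v$ has $d(w,w')=(2-\delta_0)k$ with $\delta_0=O(\varepsilon)$, let $P'$ be the corresponding $vw'$-path, and set $\cluster:=\cluster_v\cup\cluster_w\cup\cluster_{w'}$.

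First I would pin down the global geometry forced by this assumption. As in the proof of Lemma~\ref{distalt}, $\abs{P_v}\le 2k-r+\varepsilon k$; on the other hand the path of~$T_v$ joining $w$ to~$w'$ has length at least $d(w,w')=(2-\delta_0)k$, so also $\abs{P_v}\ge(2-\delta_0)k$, which forces $r=O(\varepsilon k)$ and $\abs{P_v}=(2\pm O(\varepsilon))k$. Feeding this into \eqref{eq:GeodesicsBd}, \eqref{eq:Pilength} and Lemma~\ref{GeodesicDisj} --- the latter applied to $P$, to $P'$ and to a geodesic realising $d(w,w')$ --- I would show that, up to $O(\varepsilon k)$ vertices, $P_v$ is a path of length~$2k$ whose endpoints are $w$ and~$w'$ and whose midpoint is~$v$, with $P$ and~$P'$ running along its two halves $P_1$ and~$P_2$; and that the same picture holds with $P_w$ in place of~$P_v$. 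Thus the graph has an ``axis'', a $w$--$w'$ path of length $(2-O(\varepsilon))k$ passing near~$v$, and by Lemma~\ref{GeodesicDisj} every geodesic of~$G$ lies on this axis apart from $O(\varepsilon k)$ vertices; in particular $\operatorname{diam}(G)\le(2+O(\varepsilon))k$ and every vertex of~$G$ lies within $O(\varepsilon k)$ of the axis.

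The heart of the proof is the claim that $d_{G_k}(x)=o(k)$ for every $x\notin\cluster$. Let $x\notin\cluster$ and let $y$ be a $k$-neighbour of~$x$. By Lemma~\ref{GeodesicDisj} both $x$ and~$y$ lie within $O(\varepsilon k)$ of the axis; writing $\hat x,\hat y$ for projections onto the axis and reading distances to $v$, $w$ and~$w'$ off positions along it, the three hypotheses $d(x,v),d(x,w),d(x,w')>\delta k$ confine~$\hat x$ to the part of the axis avoiding the $O(\delta k)$-balls about $v$, $w$ and~$w'$, and then $d(x,y)=k$, combined with the same analysis performed along~$P_w$, pins~$\hat y$ to an interval of $O(\varepsilon k)$ positions on the axis, consistently for $P_v$ and~$P_w$. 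Hence all $k$-neighbours of~$x$ lie in a region of diameter $O(\varepsilon k)$. To conclude that this region contains only $o(k)$ vertices one needs to know that the ``mass'' of~$G$ is concentrated near $v$, $w$ and~$w'$; this is where I would bring in the near-extremal structure of~$G_k$ implied by $e(G_k)\ge(n-k+1)^2/4$ --- by Mantel's theorem $G_k$ is within $o(n^2)$ edges of a balanced complete bipartite graph, so its near-bipartition, together with the hypothesis that $\{v,w\}$ has few $k$-unaffiliated vertices, should force the large balls of radius $O(\varepsilon k)$ to sit near $v$, $w$ or~$w'$, hence inside~$\cluster$. Converting the localisation of the $k$-neighbourhood of~$x$ into this genuinely small numerical bound is the step I expect to be the main obstacle.

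Finally, the $(1-O(\delta))k$ internal vertices of the geodesic~$P$ and the $(1-O(\delta))k$ internal vertices of~$P'$ all lie outside~$\cluster$, and (for a suitable choice of~$\delta$) the two sets are disjoint, since a common vertex would be at distance less than $k-\delta k$ from both $w$ and~$w'$, contradicting $d(w,w')=(2-\delta_0)k$. Hence $\abs{V\setminus\cluster}\ge(2-O(\delta))k$, so $\abs{\cluster}\le n-(2-O(\delta))k$, and since $G_k$ is triangle-free, Mantel's theorem together with the claim above gives
\[
e(G_k)\ \le\ \frac{\abs{\cluster}^2}{4}+\sum_{x\notin\cluster}d_{G_k}(x)\ \le\ \frac{\bigl(n-(2-O(\delta))k\bigr)^2}{4}+o(nk),
\]
which is strictly less than $(n-k+1)^2/4$ once $\varepsilon$ (hence~$\delta$) is small enough and $k$ and~$n$ are large. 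This contradicts our standing assumption $e(G_k)\ge(n-k+1)^2/4$ and completes the proof.
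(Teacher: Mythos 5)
Your overall strategy matches the paper's: assume via Lemma~\ref{distalt} that some $k$-neighbour~$w'$ of~$v$ is at distance~$(2-o(1))k$ from $w$, exploit flatness along the long geodesic, exhibit roughly $2k$ vertices outside $\cluster_v\cup\cluster_w\cup\cluster_{w'}$ with small $k$-degree, and contradict $e(G_k)\ge (n-k+1)^2/4$ by Mantel/Tur\'an. But the step you yourself flag as the main obstacle is a genuine gap, and it is exactly where the paper's proof goes a different, simpler way. You need a numerical bound on $d_{G_k}(x)$ for vertices~$x$ outside the clusters, and you try to prove the (unnecessarily strong) bound~$o(k)$ for \emph{every} such~$x$ by localising the $k$-neighbourhood of~$x$ to a region of diameter~$O(\varepsilon k)$ and then invoking a vague ``mass concentration''/Mantel-stability property of~$G_k$. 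Nothing in your sketch delivers this: a ball of radius~$O(\varepsilon k)$ can contain $\Theta(n)$ vertices (think of the leaves of a broom), for a general $x\notin\cluster$ whose distance to~$v$ is close to, but not exactly,~$k$ your localisation does not exclude that many of its $k$-neighbours lie inside~$\cluster_v$, and stability for Mantel's theorem says nothing about where in~$G$ the two large classes of~$G_k$ sit. So the central claim ``$d_{G_k}(x)=o(k)$ for all $x\notin\cluster$'' is unproven (and not obviously true), and your final display depends on it.

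The paper closes this gap with a counting observation your proposal never uses: since $v$ and~$w$ have fewer than $2k-r-2<2k$ $k$-unaffiliated vertices, all but at most~$2k$ vertices of~$G$ are at distance exactly~$k$ from $v$ or from~$w$; by Lemma~\ref{distalt} the former lie within~$o(k)$ of $w$ or~$w'$, and---because in the flat picture no vertex is at distance~$(2-o(1))k$ from~$v$, so the dichotomy of Lemma~\ref{distalt} with $v$ and $w$ interchanged collapses to the ``close'' branch---the latter lie within~$o(k)$ of~$v$. Hence all but at most~$2k$ vertices lie in the three clusters. One then only needs the $(2-o(1))k$ vertices \emph{on} the long geodesic that are at distance more than~$2\delta k$ from each of $v$,~$w$,~$w'$: such a vertex has no $k$-neighbour in any cluster, hence $k$-degree at most~$2k$ (all its $k$-neighbours being among the at most~$2k$ non-cluster vertices), and Tur\'an applied to the remaining vertices gives $e(G_k)\le \bigl((n-c)/2\bigr)^2+2ck<(n-k+1)^2/4$ for $n$ large compared with~$k$. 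If you replace your unproved degree claim by this affiliation count, your argument goes through; as written, it does not.
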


\begin{proof}
Suppose that $w' \neq w$ is a $k$-neighbour of~$v$ such that $d(w,w') = (2 - o(1))k$.  In this case, our graph~$G$ is `flat', i.e., it contains a geodesic~$P_3$ of length~$(2 - o(1))k$ and every vertex is $o(k)$ away from $P_3$. The vertex~$v$ lies close to the centre of~$P_3$, whereas $w$ and $w'$ lie near the opposite ends: every $k$-neighbour of~$v$ lies within~$o(k)$ of either $w$ or~$w'$. Since there are no vertices at distance~$(2 - o(1))k$ from $v$, switching $v$ and $w$ in the statement of Lemma~\ref{distalt} yields that all $k$-neighbours of~$w$ are close to~$v$. Since, by assumption, $v$ and $w$ have fewer than~$2k$ $k$-unaffiliated vertices, all but at most~$2k$ vertices are contained in one of the clusters $\cluster_v$,~$\cluster_w$ and~$\cluster_{w'}$. Every vertex on $P_3$ not lying within~$2\delta k$ of either $v$,~$w$ or~$w'$ cannot have a $k$-neighbour in any of these clusters. Therefore any such vertex has at most~$2k$ $k$-neighbours. So, we have $c = (2 - o(1))k$ vertices of $k$-degree at most~$2k$. Applying Tur\'an's theorem to the remaining vertices, we obtain
\[e(G_k)\leq \left(\dfrac{n - c}{2}\right)^2 + 2ck < \left(\dfrac{n - k + 1}{2}\right)^2,\]
provided that $n$ is sufficiently large compared to~$k$, contradicting our hypothesis that $e(G_k) \geq (n - k + 1)^2 / 4$.
\end{proof}

Since all but at most~$2k$ of the vertices in $G$ are at distance~$k$ from either $v$ or~$w$, we can conclude that all but at most~$2k$ vertices lie either in $\cluster_v$, that is, within distance~$o(k)$ of~$v$, or in $\cluster_w$. This is a fairly strong structural property of~$G$ and from here it is a short step to completing the proof of Theorem~\ref{thm:Dbroom}.

\begin{proof}[Proof of Theorem~\ref{thm:Dbroom}.]
Observe that by the definition of~$\delta$, every vertex on the $vw$-path~$P$ that is at distance more than~$\delta k$ from both $v$ and $w$ cannot have a $k$-neighbour in either cluster.  We shall use these vertices, together with a small set that we shall now construct, to produce a contradiction as in the proof of Lemma~\ref{clusters}.

We shall now show that $\abs{P_2}$ must be very small and that $\abs{P_1}$ cannot be much larger than~$k$.  It will then follow that $\abs{P_v}$ cannot be much larger than~$k$, either.

If $\lvert P_2 \rvert > 10 \delta k$, then $P_2$ contains $4\delta k$ vertices at distance between $2\delta k$~and~$6\delta k$ from $v$.  By Lemma~\ref{GeodesicDisj}, at least~$4\delta k - 2\varepsilon k > 3\delta k$ of these vertices are contained either in $P_3$ or in $P_4$.  We shall consider two cases: when at least~$\delta k$ of these vertices are in $P_3$ and when at least~$2 \delta k$ of them are in $P_4$.  We shall show below that either case produces a contradiction as in the proof of Lemma~\ref{clusters}.  Suppose first that $\delta k$ of them are in $P_3$.  Let $S_1 = \{x \in P_3 : 2\delta k \leq d(v, x) \leq 6\delta k\}$, let $y$ be the closest vertex of~$P_3$ to~$v$ and let $s \in P_2$ be such that $d(v, s) = 2\delta k$ (see Figure~\ref{fig:outside1}).  By Lemma~\ref{GeodesicDisj} and the triangle inequality, we have $d(y, s) \geq 2\delta k - 2\varepsilon k$.  So,
\[
d(w, s) \geq k - 2\varepsilon k + 2\delta k - 2\varepsilon k \geq k + \delta k,
\]
which means that the vertices of~$S_1$ are all at distance at least~$k + \delta k$ from $w$ and therefore have no $k$-neighbours in the clusters.

\begin{figure}
\centering
\includegraphics{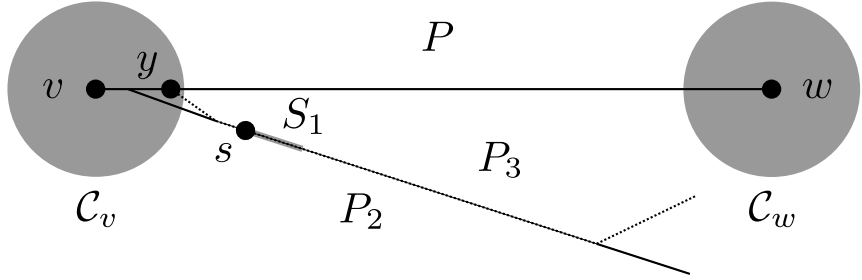}
\caption[Vertices of~$P_v$ just outside of the cluster of~$v$]{The vertices of~$S_1$, shown in grey, are just outside of $\cluster_v$, and so have no $k$-neighbours in either $\cluster_v$ or~$\cluster_w$.}\label{fig:outside1}
\end{figure}

Suppose instead that $2 \delta k$ of them are in $P_4$. We shall show that this implies that $\abs{P_4} \sim k$.  Let $x$ be the closest vertex of~$P_4$ to~$w$ and let $z$ be the point at which $P_4$ meets $P_2$. Let $a$ be the closest point of~$P_2$ to~$v$ (see Figure~\ref{fig:outside2}). We may assume that
\[3\delta k < 4\delta k - 2\varepsilon k \leq d(a,z) \leq d(v,z) \leq 4\delta k.\]
Then, because $a$, $x \in P$, 
\begin{equation}\label{eq:P_4LB}
\lvert P_4 \rvert \geq d(x,z) \geq d(a,x) - d(a,z) \geq k - 4\varepsilon k - 4\delta k.
\end{equation}
Also, since $\lvert P_w \rvert \leq (2 + \varepsilon)k$ and $\lvert P_3 \rvert \geq (1 - 2\varepsilon)k$, it follows that
\begin{equation}\label{eq:P_4UB}
\lvert P_4 \rvert \leq (1 + 3\varepsilon)k.
\end{equation}
Then $\lvert P_4 \rvert \sim k$.  Let $S_2$ denote the $2\delta k$ central vertices of~$P_4$.  Observe that if $q \in S_2$, then both $d(q,w)$ and $d(q,v)$ are bounded away from both~$0$ and~$k$. Indeed, after a bit of calculation, it follows from~\eqref{eq:P_4LB} and~\eqref{eq:P_4UB} that
\[
\dfrac{k - 4\varepsilon k - 4\delta k}{2} - \delta k \leq d(q,w) \leq \dfrac{(1 + 3\varepsilon)k}{2} + \delta k
\]
and that
\[
3\delta k + \dfrac{k - 4\varepsilon k - 4\delta k}{2} - \delta k \leq d(q,v) \leq 4\delta k + \dfrac{(1 + 3\varepsilon)k}{2} + \delta k.
\]

\begin{figure}
\centering
\includegraphics{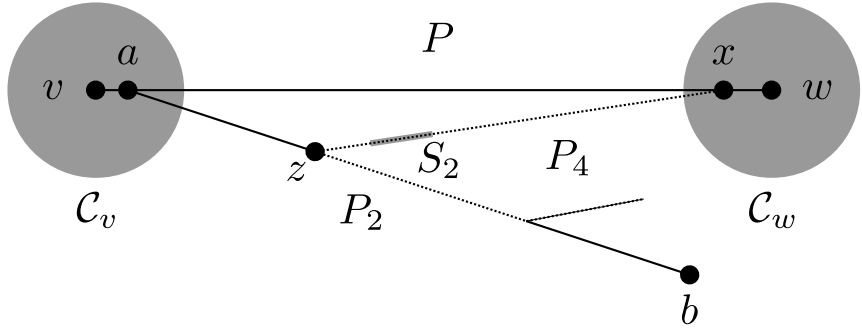}
\caption[The central vertices of~$P_4$ are outside of both clusters]{The vertices of~$S_2$, shown in grey, have no $k$-neighbours in either cluster.}\label{fig:outside2}
\end{figure}

Together with the aforementioned vertices on $P$, in each case we obtain $c \geq k + \delta k$ vertices that are not in either cluster and whose $k$-degree is at most~$2k$. By applying Tur\'an's theorem to these vertices as in the proof of Lemma~\ref{clusters}, we obtain a contradiction for large enough~$n$. Thus, $\lvert P_2 \rvert \leq 10 \delta k$. 

Similarly, we claim that $\lvert P_1 \rvert \leq k + 10 \delta k$. If not, then $P_1$ contains at least~$3\delta k$ vertices that are at distance between $k+2\delta k$~and~$k+5\delta k$ from $v$, and at distance between $2\delta k - 2\varepsilon k$~and~$5\delta k + 2\varepsilon k$ from $w$, and we obtain a contradiction as above.

We therefore have 
\[\lvert P_v \rvert \leq k + 20\delta k.\]
Recall that by Lemma~\ref{SpTreePathLen} we have $\abs{P_v} \geq 2k - r$.  If $2k - r > k + 20\delta k$, then we have a contradiction, which means that $r \geq k - 1$, that is, that $G$ has at least~$k - 1$ interior vertices.  Hence, by~\eqref{eq:CSint}, we have $e(G_k) = (n - k + 1)^2 / 4$.  Let $I$ denote the set of interior vertices of~$G$, and recall that by definition an interior vertex is isolated in $G_k$.  Because $e(G_k)$ is maximal, it follows from~\eqref{eq:CSintunaff} that for all~$\{x, y\} \in E(G_k)$, the only vertices that are $k$-unaffiliated with both $x$~and~$y$ are those on the (unique) shortest path between $x$ and $y$.  Hence, all of these vertices must be in $I$.  It follows that $G_k \setminus I$ is a complete bipartite graph with balanced parts, which means that $G$ is $k$-isomorphic to the double broom, as claimed.

Otherwise, for $k$ and $n$~large enough, if any geodesic outside of~$P$ has length more than~$20\delta k + 4\epsilon k \leq 21\delta k$, then it has at least~$2\varepsilon k$ vertices disjoint from~$P_v$, and we are done by Lemma~\ref{GeodesicDisj}.

Let $m$ be the midpoint of~$P$, or one of the midpoints if $k$ is odd. Suppose that there exist $k$-neighbours $x$ and~$y$ in $G$ such that every $k$-geodesic between $x$ and $y$ misses $m$. Let $Q$ be such a geodesic. Then it must miss either the path~$P_{vm}$ between $v$ and $m$ or the path~$P_{wm}$ between $w$ and $m$. In either case, $Q$ will contain a geodesic of length at least~$k/4$ disjoint from~$P$, a contradiction. Hence, every pair of $k$-neighbours is connected by a geodesic containing~$m$.

It follows that $G$ is $k$-isomorphic to~$T_m$, the breadth-first tree with respect to~$m$. By Theorem~\ref{thm:treemax}, $T_m$ has at most as many $k$-distances as the double broom does. Thus, $e(G) = (n - k + 1)^2 / 4$ and $G$ is $k$-isomorphic to the double broom. The proof of Theorem~\ref{thm:Dbroom} is complete.
\end{proof}

\section{Discussion}\label{disc}

We believe that Conjecture~\ref{Kt+1Free} may be susceptible to a similar approach to the one above. In Section~\ref{prelim}, we obtained our first non-trivial bound, Lemma~\ref{CS}, by adapting a proof of Mantel's theorem, which is the simplest case of Tur\'an's theorem. Unfortunately we were not able find a straightforward generalisation of this approach to $K_{t+1}$-free distance-$k$ graphs when $t \geq 3$. A possible solution might be to adapt a proof of Tur\'an's theorem that works for all~$t$.  However, it seems difficult to generalise Lemma~\ref{SpTreePathLen} for values of $t$~greater than 2.

We also note that in~\cite{csik}, Csikv\'ari asked a similar question about maximising or minimising the number of (closed) \emph{walks} of length~$k$ in a connected graph~$G$ on $n$ vertices and $m$ edges. In the same paper Csikv\'ari settled the case of closed walks and $m = n-1$, that is, when $G$ is a tree. The answer for general walks on trees was given in~\cite{bb-mt}, but the general case remains open.

\section{Acknowledgments}\label{ack}
We would like to thank B\'ela Bollob\'as for introducing us to the problem and for helpful discussions.  We would like to thank the anonymous referees for their helpful comments.  We would also like to thank Paul Balister for helpful comments and for running the computer search mentioned after Example~\ref{ex:G2max}.

\bibliographystyle{plain}
\bibliography{DistancesBib,GraphBib}

%
%
%
%
%
%
%
%
%
%
%
%

\end{document}